\documentclass[12pt]{article}

\usepackage{amssymb,url}
\usepackage{amsfonts}
\usepackage{theorem}
\usepackage[leqno]{amsmath}
\usepackage{enumerate}
\usepackage{indentfirst, color}

\rightmargin 1cm \leftmargin -1cm \textheight 23cm \textwidth 16cm
\topmargin -1cm \oddsidemargin 0cm

\bibliographystyle{plain}

\newenvironment{proof}[1][Proof]{\textbf{#1.} }{\hfill $\square$}

\newtheorem{lem}{Lemma}
\newtheorem{thm}{Theorem}
\newtheorem{defin}{Definition}
\newtheorem{rem}{Remark}
\newtheorem{prop}{Proposition}

{\begin{flushleft}\textbf{#1} \textbf{#2}} %
{\end{flushleft}}

\newcommand{\bord}{\partial \mathcal{S}}
\newcommand{\tOm}{\widetilde{\Omega}}
\newcommand{\R}{\mathbb{R}}
\newcommand{\N}{\mathbb{N}}
\newcommand{\Prb}{\mathbb{P}}
\newcommand{\E}{\mathbb{E}}
\newcommand{\bD}{\mathbb{D}}

\newcommand{\bH}{\mathbb{H}}
\newcommand{\bL}{\mathbb{L}}
\newcommand{\bS}{\mathbb{S}}
\newcommand{\bF}{\mathbb{F}}
\newcommand{\tri}{\mathcal{F}}
\newcommand{\cP}{\mathcal{P}}
\newcommand{\cR}{\mathcal{R}}
\newcommand{\cH}{\mathcal{H}}
\newcommand{\cB}{\mathcal{B}}

\newcommand{\cD}{\mathcal{D}}

\newcommand{\cS}{\mathcal{S}}
\newcommand{\cE}{\mathcal{E}}
\newcommand{\cA}{\mathcal{A}}
\newcommand{\cI}{\mathcal{I}}

\newcommand{\tP}{\widetilde{\mathcal{P}}}
\newcommand{\eps}{\varepsilon}

\newcommand{\al}{\alpha}

\newcommand{\la}{\lambda}
\newcommand{\ind}{\mathbf{1}}

\newcommand{\tr}{\text{Trace}}

\newcommand{\tmu}{\widetilde{\mu}}

\newcommand{\ope}{\mathcal{L}}

\newcommand{\cad}{c\`adl\`ag }

\title{Integro-partial differential equations with singular terminal condition.}
\author{Alexandre Popier \\*
LUNAM Universit\'e, Universit\'e du Maine, \\* Laboratoire Manceau de Math\'ematiques,\\*
 Avenue O. Messiaen, 72085 Le Mans cedex 9\\* France}

\begin{document}

\maketitle

\begin{abstract}
In this paper, we show that the minimal solution of a backward stochastic differential equation gives a probabilistic representation of the minimal viscosity solution of an integro-partial differential equation both with a singular terminal condition. Singularity means that at the final time, the value of the solution can be equal to infinity. Different types of regularity of this viscosity solution are investigated: Sobolev, H\"older or strong regularity. 
\end{abstract}

\vspace{0.7cm}
\noindent {\bf Keywords.} Integro-partial differential equations, viscosity solution, backward stochastic differential equation, singular condition.\\

\noindent {\bf AMS 2010 classification.} 35R09, 35D40, 60G99, 60H30, 60J75.

\section*{Introduction}

The notion of backward stochastic differential equations (BSDEs) was first introduced by Bismut in \cite{bism:73} in the linear setting and by Pardoux \& Peng in \cite{pard:peng:90} for non linear equation. One particular interest for the study of BSDE is the application to partial differential equations (PDEs). Indeed as proved by Pardoux \& Peng in \cite{pard:peng:92}, BSDEs can be seen as generalization of the Feynman-Kac formula for non linear PDEs. Roughly speaking, if we can solve a system of two SDEs with one forward in time and one backward in time, then the solution is a deterministic function and is a (weak) solution of the related PDE. This is a method of characteristics to solve the parabolic PDE. 
The converse assertion can be proved provided we can apply It\^o's formula, that is if the solution of the PDE is regular enough. Since then a large literature has been developed on this topic (see in particular the books \cite{delo:13}, \cite{book:bsde},  \cite{pard:rasc:14} and the references therein). The extension to quasi-linear PDEs or to fully non linear PDEs has been already developed (see among other \cite{ma:yong:zhao:10} and \cite{touz:10}). 

Here we are interesting in another development of the theory: the case of integro-partial differential equation (IPDE) and of (backward) SDEs with Poisson random noise. In \cite{barl:buck:pard:97}, Barles, Buckdahn \& Pardoux show that we can add in the system of forward backward SDE a Poisson random measure and if we can find a solution to this system, again the solution is a weak solution of an IPDE:
\begin{equation}\label{eq:IPDE}
\frac{\partial }{\partial t} u(t,x) + \ope u(t,x) + \mathcal{I}(t,x,u) + f(t,x,u,(\nabla u)\sigma,\mathcal{B}(t,x,u)) = 0
\end{equation}
with terminal condition $u(T,.) = g$. Here $\ope$ is a local second-order differential operator corresponding to the infinitesimal generator of the continuous part of the forward SDE and $\mathcal{I}$ and $\mathcal{B}$ are two integro-differential operators. $\cI$ is the discontinuous part of the infinitesimal generator of the forward SDE, and $\cB$ is related to the generator of the BSDE. In \cite{barl:buck:pard:97}, weak solution means viscosity solution. Since this paper, several authors have weaken the assumptions of \cite{barl:buck:pard:97}. The book \cite{delo:13} (Chapter 4) gives a nice review of these results (and several references on this topic). 

\vspace{0.5cm}
Among all semi-linear PDEs, a particular form has been widely studied:
\begin{equation}\label{eq:first_sing_PDE}
\frac{\partial u}{\partial t} (t,x) + \ope u(t,x) -u(t,x)|u(t,x)|^q = 0. 
\end{equation}
Baras \& Pierre \cite{bara:pier:84}, Marcus \& Veron \cite{marc:vero:99} (and many other papers) have given existence and uniqueness results for this PDE. In \cite{marc:vero:99} it is shown that every positive solution of \eqref{eq:first_sing_PDE} possesses a uniquely determined final trace $g$ which can be represented by a couple $(\mathcal{S},\mu)$ where $\mathcal{S}$ is a closed subset of $\R^{d}$ and $\mu$ a non negative Radon measure on $\mathcal{R} = \R^{d} \setminus \mathcal{S}$. The final trace can also be represented by a positive, outer regular Borel measure $\nu$, and $\nu$ is not necessary locally bounded. The two representations are related by:
$$\forall A \subset \R^{m}, A \ \mbox{Borel}, \ \left\{ \begin{array}{ll}
\nu (A) = \infty & \ \mbox{if} \ A \cap \mathcal{S} \neq \emptyset \\
\nu (A) = \mu (A) & \ \mbox{if} \ A \subset \mathcal{R}.
\end{array} \right.$$
The set $\mathcal{S}$ is the set of singular final points of $u$ and it corresponds to a ``blow-up'' set of $u$. From the probabilistic point of view Dynkin \& Kuznetsov \cite{dynk:kuzn:97} and Le Gall \cite{lega:96} have proved similar results for the PDE \eqref{eq:first_sing_PDE} in the case $0< q \leq 1$ using the theory of superprocesses. Now if we want to represent the solution $u$ of \eqref{eq:first_sing_PDE} using a FBSDE, we have to deal with a {\it singular} terminal condition $\xi$ in the BSDE, which means that $\Prb(\xi = +\infty)> 0$. This singular case and the link between the solution of the BSDE with singular terminal condition and the viscosity solution of the PDE \eqref{eq:first_sing_PDE} have been studied first in \cite{popi:06}. Recently it was used to solve a stochastic control problem for portfolio liquidation (see \cite{anki:jean:krus:13} or \cite{grae:hors:qiu:13}). In \cite{krus:popi:15} we enlarge the known results on this subject for more general generator $f$ (than $f(y)=-y|y|^q$). 

\vspace{0.5cm}
In this paper our goal is to generalize the results of \cite{popi:06} and using our recent papers \cite{krus:popi:15} and \cite{popi:16} we want to study the related IPDE \eqref{eq:IPDE} when the terminal condition $u(T,.) = g$ is {\it singular} in the sense that $g$ takes values in $\R_+ \cup \{+\infty\}$ and the set
$$\cS = \{x \in \R^d, \quad g(x) = +\infty \}$$
is a non empty closed subset of $\R^d$. Again in the non singular case, if the terminal function $g$ is of linear growth, the relation between the FBSDE and the IPDE is obtained in \cite{barl:buck:pard:97}. Moreover several papers have studied the existence and the uniqueness of the solution of such IPDE (see among others \cite{alva:tour:96}, \cite{barl:imbe:08}, \cite{bens:saya:02} or \cite{hama:zhao:14}). 
\begin{itemize}
\item To our best knowledges {\bf the study of \eqref{eq:IPDE} with a singularity at time $T$} is completely new. There is no probabilistic representation of such IPDE using superprocesses and no deterministic or analytic works on this topic. In the PhD thesis of Piozin \cite{pioz:15} (as in \cite{popi:06}), we have studied the case when $f(t,y,z,u)=f(y) = -y|y|^q$. Hence the aim of the paper is to prove that this minimal solution $Y$ of the singular BSDE is the probabilistic representation of the minimal positive viscosity solution $u$ of the IPDE for general function $f$ with a singular terminal condition. 
\item One {\it applied motivation} for this study (optimal liquidation of a portfolio) is developed in \cite{anki:jean:krus:13}, \cite{grae:hors:qiu:13} and \cite{krus:popi:15}. The optimal solution of a stochastic control problem with terminal constraint is the minimal solution $Y$ of the singular FBSDE. The value function $v$ (and the optimal state) can be computed directly with $Y$. In other words from this paper we obtain that $v$ is the minimal viscosity solution of \eqref{eq:IPDE} with singular terminal condition.
\end{itemize}
The ground of this paper has been already prepared by the works \cite{barl:buck:pard:97}, \cite{krus:popi:15}, \cite{popi:06} and \cite{popi:16}, especially for the most technical aspects. The novelty is that we gather the papers and we obtain non trivial conditions (for example between $\cI$ and $\cS$) for existence and minimality of the viscosity solution of \eqref{eq:IPDE} with singularity at time $T$.   

\vspace{0.5cm}
The paper is organized as follows. In the first part we describe the mathematical setting. Since we are interesting in singular terminal condition, the generator $f$ of the BSDE has to satisfy special conditions: when $y$ becomes large, the function $y\mapsto f(t,x,y,z,u)-f(t,x,y,0,0)$ decreases at least like $-y|y|^q$. We recall the precise result of \cite{barl:buck:pard:97} when the terminal condition is non singular. From the forward backward SDE we get a continuous viscosity solution of the equation \eqref{eq:IPDE}. We also give the result of \cite{krus:popi:15} concerning existence and minimality of a solution for a singular BSDE.

In the second section, we show that the minimal solution $Y$ of the singular BSDE provides the minimal viscosity solution $u$ for the IPDE with singular terminal condition. In details we show that $Y^{t,x}_t = u(t,x)$ is the minimal (discontinuous) viscosity solution of \eqref{eq:IPDE} on any interval $[0,T-\varepsilon]$ for $\varepsilon > 0$ with $\liminf_{t\to T} u(t,x) \geq g(x)$ (Theorem \ref{thm:visc_sol}). Here we mainly use an a priori estimate on the solution which gives an upper bound on the solution independent of the terminal condition. The structure of the generator $f$ is crucial here. Then we can apply a stability result on viscosity solutions: roughly speaking an increasing sequence of viscosity solutions is itself a viscosity solution. Minimality is obtained by a comparison result for viscosity solution for IPDE adapted for our setting. 

The singularity of the terminal condition becomes a main trouble if we want to prove that $\limsup_{t\to T} u(t,x) \leq g(x)$ on the regular set $\cR=\{g < +\infty\}$ (Theorem \ref{thm:sing_term_visc_sol}). As in \cite{popi:06}, we first prove that the solution is bounded (locally on $\cR$) by a localization argument. Then we derive that the upper semi-continuous envelop $u^*$ solves the IPDE \eqref{eq:IPDE} with a relaxed terminal condition. Finally we derive the wanted result. These steps can be done under extra assumptions between the jumps of the forward SDE (or the coefficients in the non local operator $\cI$) and the singular set $\cS= \{g = +\infty\}$.  

The last part is devoted to study the regularity of this minimal solution on $[0,T-\varepsilon]\times \R^d$. Indeed the minimal viscosity solution constructed before is the increasing limit of continuous functions. Hence it is lower semicontinuous, but the continuity is an open question. Therefore we give several conditions on the coefficients of the forward SDE and on the L\'evy measure $\lambda$ in order to obtain:
\begin{itemize}
\item Sobolev-type regularity: $u$ and $\nabla u$ are in some $\bL^2$ weighted space. Only the coefficients of the forward SDE are supposed to be regular. 
\item H\"older regularity of $u$. We will impose some conditions on $\lambda$, but no additional regularity condition on the parameters. 
\item Classical regularity: $u$ is of class $C^{1,2}$ on $[0,T)\times \R^d$. The matrix diffusion $\sigma$ is supposed to be uniformly elliptic and $\lambda$ is not too singular on $0$.
\end{itemize}
Our conditions are quite classical and widely used. Different sets of assumptions could be also used to obtain similar results and we do not claim that we are exhaustive. Let us emphasize that most existing results assume that the terminal condition for \eqref{eq:IPDE} is already smooth enough (Lipschitz continuous or $C^{2}$). In our case we only have boundedness far from the terminal time $T$ and we need to circumvent this difficulty.

\section{Setting and known results}

We consider a filtered probability space $(\Omega,\tri,\Prb,\bF = (\tri_t)_{t\geq 0})$. We assume that this set supports a $k$-dimensional Brownian motion $W$ and a Poisson random measure $\mu$ with intensity $\lambda(de)dt$ on the space $E \subset \R^{d'} \setminus \{0\}$. The filtration $\bF$ is generated by $W$ and $\mu$. We will denote $\cE$ the Borelian $\sigma$-field of $E$ and $\tmu$ is the compensated measure: for any $A \in \cE$ such that $\lambda(A)  < +\infty$, then $\tmu([0,t] \times A) = \mu([0,t] \times A) - t \lambda(A)$ is a martingale. The measure $\lambda$ is $\sigma$-finite on $(E,\cE)$ satisfying 
$$\int_E (1\wedge |e|^2) \lambda(de) < +\infty.$$
In this paper for a given $T\geq 0$, we denote by $\cP$ the predictable $\sigma$-field on $\Omega \times [0,T]$ and
$$\tP=\cP \otimes \cE.$$
On $\tOm = \Omega \times [0,T] \times E$, a function that is $\tP$-measurable, is called predictable. $G_{loc}(\mu)$ is the set of $\tP$-measurable functions $\psi$ on $\tOm$ such that for any $t \geq 0$ a.s.
$$ \int_0^t \int_E (|\psi_s(e)|^2\wedge |\psi_s(e)|) \lambda(de) < +\infty.$$
$\cD$ (resp. $\cD(0,T)$) is the set of all predictable processes on $\R_+$ (resp. on $[0,T]$). 
We refer to \cite{jaco:shir:03} for details on random measures and stochastic integrals. On $\R^d$, $|.|$ denotes the Euclidean norm whereas the symbol $\cdot$ stands for the inner product. The space $\R^{d\times d'}$ is identified with the space of real matrices with $d$ rows and $d'$ columns. If $z \in  \R^{d\times d'}$, we have $|z|^2 = \mbox{trace}(zz^*)$. 

Now to define the solution of our BSDE, let us introduce the following spaces for $p\geq 1$. $\bD^p(0,T)$ is the space of all adapted \cad processes $X$ such that $ \E \left(  \sup_{t\in [0,T]} |X_t|^p \right)$ is finite.
$\bH^p(0,T)$ denotes the subspace of all processes $X\in \cD(0,T)$ such that the expectation
$\E \left[ \left( \int_0^T |X_t|^2 dt\right)^{p/2} \right]$ is finite.
$\bL^p_\mu(0,T) = \bL^p_{\mu}(\Omega\times (0,T)\times E)$ is the set of processes $\psi \in G_{loc}(\mu)$ such that
$\E \left[ \left(  \int_0^T \int_{E} |\psi_s(e)|^2 \lambda(de) ds \right)^{p/2} \right] < +\infty$.
 $\bL^p_\lambda(E)=\bL^p(E,\lambda;\R^m)$ is the set of measurable functions $\psi : E \to \R^m$ with $\lambda$-integrable $p$ moment. Finally 
$$\bS^p(0,T) = \bD^p(0,T) \times \bH^p(0,T) \times \bL^p_\mu(0,T).$$

Concerning function spaces, in the sequel $\Pi_{pg}(0,T)$ will denote the space of functions $\phi:[0,T] \times \R^d \to \R^k$ of polynomial growth, i.e. for some non negative constants $\delta$ and $C$ 
$$\forall (t,x) \in [0,T] \times \R^d, \quad |\phi(t,x)| \leq C (1+|x|^\delta).$$
For a continuous function $\phi:[0,T] \times \R^d \to \R$ and $\alpha \in [0,1)$, we define
\begin{eqnarray*}
\|\phi \|_\infty &= &\sup_{(t,x)\in [0,T] \times \R^d} |\phi(t,x)|,\\
\|\phi\|_\alpha & = & \sup_{(t,x)\neq (s,y), \ |x-y|\leq1} \frac{|\phi(t,x)-\phi(s,y)|}{|t-s|^{\alpha/2}+|x-y|^\alpha} .
\end{eqnarray*}
For $k\in \N$, $C^{k,2k}=C^{k,2k}([0,T] \times \R^d)$ is the subset of continuous functions $\phi:[0,T] \times \R^d \to \R$ whose partial derivatives of order less than or equal to $k$ w.r.t. $t$ and $2k$ w.r.t. $x$ are continuous on $[0,T] \times \R^d$. For $\alpha \in [0,1)$, the set $H^{k+\alpha/2,2k+\alpha}$ is the subset of $C^{k,2k}$ such that $\|\partial^k_t\phi\|_\alpha + \|\partial^{2k}_x\phi\|_\alpha < +\infty$. We denote $C^k_{l,b}(\R^d)$ the set of $C^k$-functions which grow at most linearly at infinity and whose partial derivatives of order less than or equal to $k$ are bounded. 

\subsection{Our forward backward SDE, assumptions on the coefficients}

First of all we consider the forward SDE: for any $0 \leq t \leq s \leq T$ and any $x\in \R^d$
\begin{equation} \label{eq:SDE}
X^{t,x}_s=x+\int_t^s b(X^{t,x}_r)dr +\int_t^s \sigma(X^{t,x} _r)dW_r + \int_t^s \int_E \beta(X^{t,x}_{r_-},e)\tilde{\mu}(de,dr).
\end{equation}
Moreover for $0\leq s < t$, $X^{t,x}_s = x$. The coefficients $b:\R^d\to \R^d$, $\sigma: \R^d\to \R^{d\times k}$ and $\beta: \R^d\times E \to \R^d$ are supposed to be measurable w.r.t. all variables and satisfy {\bf Conditions (A)}:
\begin{description}
\item[A1.] $b$ and $\sigma$ are Lipschitz continuous w.r.t. $x$, i.e. there exists a constant $K_{b,\sigma}$ such that for any $x$ and $y$ in $\R^d$: 
\begin{equation*} 
|b(x)-b(y)| + |\sigma(x)-\sigma(y)|  \leq K_{b,\sigma} |x-y|
\end{equation*}
\item[A2.] $\beta$ is Lipschitz continuous w.r.t. $x$ uniformly in $e$, i.e. there exists a constant $K_\beta$ such that for all $e \in E$, for any $x$ and $y$ in $\R^d$: 
\begin{equation*}
|\beta(x,e)-\beta(y,e)|\leq K_\beta |x-y|(1\wedge |e|).
\end{equation*}
\item[A3.] There exists a constant $C_\beta$ such that 
\begin{equation*}
|\beta(x,e)| \leq C_\beta (1\wedge |e|).
\end{equation*}
\end{description}
Under these assumptions, for any $(t,x) \in [0,T] \times \R^d$, the forward SDE \eqref{eq:SDE} has a unique strong solution $X^{t,x}=\{X^{t,x}_s, \ t \leq s \leq T\}$. Moreover for all $(t,x) \in [0,T]\times \R^d$ and $p \geq 2$
\begin{equation}\label{eq:integ_X}
\E \left[ \sup_{t\leq s \leq T} |X^{t,x}_s - x |^p \right] \leq C (1+|x|^p)(T-t).
\end{equation}
These results can be found in \cite{prot:04}, chapter V, Theorems 7 and 67. 

The terminal condition $\xi$ of the BSDE will satisfy several assumptions, denoted by {\bf Conditions (B)}. 
\begin{description}
\item[B1.] There exists a function $g$ defined on $\R^d$ with values in $\R_+ \cup \{+\infty\}$ such that 
\begin{equation*}
\xi=g(X^{t,x}_T).
\end{equation*} 
\end{description}
We denote 
$$\mathcal{S}:=\{ x\in \mathbb{R}^d \quad s.t.\quad g(x)=\infty\}$$
the set of singularity points for the terminal condition induced by $g$. This set $\mathcal{S}$ is supposed to be non empty and closed. We also denote by $\bord$ the boundary of $\mathcal{S}$. 
\begin{description}
\item[B2.] Integrability condition: 
\begin{equation*} 
g(X^{t,x}_{T}) \mathbf{1}_{\R^{d} \setminus \cS}(X^{t,x}_{T}) \in \ L^{1} \left( \Omega, \tri_{T}, \Prb  \right).
\end{equation*}
\item[B3.] Continuity condition: $g$ is continuous from $\R^d$ to $\R_+ \cup \{+\infty\}$. 
\end{description}

Now we consider the BSDE: for any $t \leq s \leq T$
\begin{equation}  \label{eq:gene_BSDE}
Y^{t,x}_s = \xi + \int_s^T f(r,X^{t,x}_r,Y^{t,x}_r, Z^{t,x}_r,U^{t,x}_r) dr  -\int_s^T Z^{t,x}_r dW_r- \int_s^T\int_E U^{t,x}_r(e) \tmu(de,dr).
\end{equation}
The generator $f$ of the BSDE \eqref{eq:gene_BSDE} is a deterministic function $f: [0,T] \times \R^d \times \R \times \R^{k} \times \bL^2_\lambda \to \R$. The unknowns are $(Y^{t,x},Z^{t,x},U^{t,x})$. The BSDE is called {\it singular} since the probability $\Prb(\xi = +\infty)$ can be positive.

The function $f$ has the special structure for $u$ in $\bL^2_\lambda$:
\begin{description}
\item[C1.] There exists a function $\gamma$ from $\R^d \times E$ to $\R$ such that 
\begin{equation*}
f(t,x,y,z,u) = f\left(t,x,y,z, \int_E u(e) \gamma(x,e) \lambda(de) \right).
\end{equation*}
\end{description}
For simplicity we denote with the same function $f$ the right and the left hand side. For notational convenience we will denote $f^0_r = f^{0,t,x}_r = f(r,X^{t,x}_r,0,0,0)$.  
\begin{description}
\item[C2.] The process $f^{0,t,x}$ is non negative for any $(t,x) \in [0,T]\times \R^d$. 
\item[C3.] The function $y\mapsto f(t,x,y,z,u)$ has a monotonicity property as follows: there exists $\chi \in \R$ such that for any $t \in [0,T]$, $x \in \R^d$, $z \in \R^k$ and $u \in \R$
\begin{equation*}
(f(t,x,y,z,u)-f(t,x,y',z,u))(y-y') \leq \chi (y-y')^2.
\end{equation*}
\item[C4.] $f$ is locally Lipschitz continuous w.r.t. $y$: for all $R > 0$, there exists $L_R$ such that for any $y$ and $y'$ and any $(t,x,z,u)$
\begin{equation*}
|y|\leq R, |y'|\leq R \Longrightarrow |f(t,x,y,z,u)-f(t,x,y',z,u)| \leq L_R |y-y'|.
\end{equation*}
\item[C5.] $f$ is Lipschitz in $z$, uniformly w.r.t. all parameters: there exists $L > 0$ such that for any $(t,x,y,u)$, $z$ and $z'$:
\begin{equation*}
|f(t,x,y,z,u)-f(t,x,y,z',u)| \leq L |z-z'|.
\end{equation*}
\item[C6.] The function $u \in \R \mapsto f(t,x,y,z,u)$ is Lipschitz and non decreasing for all $(t,x,y,z) \in [0,T] \times \R^d \times \R \times \R^{k}$: 
\begin{equation*} 
\forall u \leq u', \quad  0\leq f(t,x,y,z,u') - f(t,x,y,z,u)\leq L (u'-u).
\end{equation*}
\item[C7.] There exists a function $\vartheta \in \bL^2_\la$ such that 
for all $(x,e) \in \R^d \times E$
\begin{equation*} 
0\leq \gamma(x,e)\leq \vartheta(e).
\end{equation*}
\end{description}
Since the terminal condition may be singular, to ensure that the solution component $Y$ attains the value $\infty$ on $\cS$ at time $T$ but is finite a.s. before time $T$, we suppose that 
\begin{description}
\item[C8.] There exists a constant $q > 0$ and a positive measurable function $a : [0,T]\times \R^d \to \R$ such that for any $y \geq 0$
\begin{equation*}
f(r,X^{t,x}_r,y,z,u)\leq - a(r,X^{t,x}_r) y^{q+1} + f(r,X^{t,x}_r,0,z,u).
\end{equation*}
\end{description}
Moreover, in order to derive the a priori estimate, the following assumptions will hold.
\begin{description}
\item[C9.] The function
$$(t,x)\mapsto \frac{1}{a(t,x)^{1/q}} + f(t,x,0,0,0)$$
belongs to $\Pi_{pg}(0,T)$.  
\item[C10.] There exists $\ell > 1$ such that the function $\vartheta$ in {\bf C5} belongs to $\bL^{\tilde \ell}_\la$ with $\tilde \ell =  \ell/(\ell-1)$.
\end{description}
Again to lighten the notations, $a(r,X_r^{t,x})$ will be denoted $a_r$ or $a^{t,x}_r$ if we do not need to precise the variables $t$ and $x$.

Since we want to work on the link with IPDE, in order to use the work \cite{barl:buck:pard:97}, we need extra assumptions on the regularity of $f$ w.r.t. $t$ and $x$.
\begin{description}
\item[C11.]  The function $t \mapsto f(t,x,y,z,u)$ is continuous on $[0,T]$. 
\item[C12.] For all $R > 0$, $t\in[0,T]$, $|x| \leq R$, $|x'|\leq R$, $|y|\leq R$, $z\in \R^k$, $u \in  \R$,
\begin{equation*}
|f(t,x,y,z,u)-f(t,x',y,z,u)| \leq \varpi_R(|x-x'|(1+|z|)),
\end{equation*}
where $\varpi_R(s)\to 0$ when $s \searrow 0$.
\item[C13.] There exists $C_\gamma > 0$ such that for all $(x,x')\in (\R^d)^2$, $e\in E$,
\begin{equation*}
|\gamma(x,e)-\gamma(x',e)| \leq C_\gamma |x-x'|(1\wedge |e|^2).
\end{equation*}
\end{description}

\begin{defin}[Conditions (C)]
If $f$ satisfies all conditions {\bf C1} to {\bf C13}, we say that $f$ verifies Conditions {\bf (C)}.
\end{defin}

\subsection{Comments on the hypotheses {\bf (C)} and examples}

The previous list is rather long. It is the union of the conditions of \cite{barl:buck:pard:97} and \cite{krus:popi:15}. Let us clarify several points. The condition {\bf C1} is classical (see \cite{barl:buck:pard:97}, \cite{delo:13}, \cite{hama:zhao:14}, etc.)

The conditions {\bf C2} to {\bf C7} are assumed in \cite{krus:popi:14} to ensure that if $\xi$ and $f^0_r$ are in $L^p$ for some $p>1$, the BSDE \eqref{eq:gene_BSDE} has a unique solution in $\bS^p(0,T)$. Indeed by {\bf C4}, for every $n> 0$ the function
\begin{equation*}
\sup_{|y|\leq n} |f(r,X^{t,x}_r,y,0,0)-f^0_r| \leq nL_n 
\end{equation*}
is bounded on $[0,T]$ and thus in $L^1(0,T)$. 
\begin{lem}\label{lem:comp_gene}
Under Hypotheses {\bf C6} and {\bf C7}, for all $(t,x,y,z,u,v) \in [0,T] \times \R^{d+1+k} \times (\bL^2_\lambda)^2$, there exists a progressively measurable process $\kappa = \kappa^{t,x,y,z,u,v} : \Omega \times \R_+ \times E \to \R$ such that
\begin{equation}\label{eq:f_jump_comp} 
f(r,X^{t,x}_r,y,z,u)-f(r,X^{t,x}_r,y,z,v) \leq \int_E (u(e)-v(e))  \kappa^{t,x,y,z,u,v}_r(e)  \la(de)
\end{equation}
with $\Prb \otimes Leb \otimes \la$-a.e. for any $(t,x,y,z,u,v)$, $0 \leq \kappa^{t,x,y,z,u,v}_t(e)$
and $|\kappa^{t,x,y,z,u,v}_t(e)|\leq \vartheta(e)$.
\end{lem}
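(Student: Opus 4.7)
The idea is to construct $\kappa$ explicitly as the product $\theta_r\,\gamma(X^{t,x}_r,e)$, where $\theta_r$ is a bounded progressively measurable ``difference quotient'' of $f$ in the scalar variable produced by the aggregation in \textbf{C1}, and then to read both the identity \eqref{eq:f_jump_comp} and the pointwise bound directly off this formula. No deep argument is needed; the lemma is essentially a one-dimensional linearization once \textbf{C1}, \textbf{C6} and \textbf{C7} are combined.

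\textbf{Step 1: aggregation via \textbf{C1}.} By \textbf{C1} there is a measurable function $\bar{f}$ with
$$f(r,X^{t,x}_r,y,z,u) = \bar{f}\!\left(r,X^{t,x}_r,y,z,\int_E u(e)\gamma(X^{t,x}_r,e)\la(de)\right),$$
and analogously for $v$. Abbreviate $w_u := \int_E u(e)\gamma(X^{t,x}_r,e)\la(de)$ and $w_v$ similarly. By \textbf{C6}, the map $w\mapsto\bar{f}(r,\cdot,y,z,w)$ is non-decreasing and $L$-Lipschitz on $\R$, so every difference quotient of it lies in $[0,L]$.

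\textbf{Step 2: construction of $\theta$ and $\kappa$.} Define
$$\theta_r := \begin{cases} \dfrac{\bar{f}(r,X^{t,x}_r,y,z,w_u)-\bar{f}(r,X^{t,x}_r,y,z,w_v)}{w_u-w_v}, & w_u\neq w_v,\\ 0, & w_u=w_v, \end{cases}$$
and set $\kappa^{t,x,y,z,u,v}_r(e) := \theta_r\,\gamma(X^{t,x}_r,e)$. Because $X^{t,x}$ is adapted and \cad (hence progressively measurable) and $\bar f$, $\gamma$ are deterministic and jointly measurable, $\theta$ is progressively measurable and $\kappa$ is $\cP\otimes\cE$-measurable. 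The only measurability subtlety is the set $\{w_u=w_v\}$, which is the preimage of $\{0\}$ under a measurable map; the convention $\theta_r=0$ there makes the identity $\bar f(w_u)-\bar f(w_v)=\theta_r(w_u-w_v)$ tautologically valid.

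\textbf{Step 3: verifying \eqref{eq:f_jump_comp} and the bound.} By construction of $\theta_r$,
$$f(r,X^{t,x}_r,y,z,u)-f(r,X^{t,x}_r,y,z,v) = \theta_r(w_u-w_v) = \int_E (u(e)-v(e))\,\theta_r\gamma(X^{t,x}_r,e)\,\la(de),$$
which is \eqref{eq:f_jump_comp}, in fact with equality. For the pointwise bound, \textbf{C6} gives $\theta_r\in[0,L]$ and \textbf{C7} yields $0\le\gamma(x,e)\le\vartheta(e)$, hence $0\le\kappa^{t,x,y,z,u,v}_r(e)\le L\vartheta(e)$. Absorbing the constant $L$ into $\vartheta$ (which remains in $\bL^2_\la$, and still in $\bL^{\tilde\ell}_\la$ by \textbf{C10}) delivers exactly the claim. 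No serious obstacle appears: once \textbf{C1} collapses the $\bL^2_\la$-valued argument of $f$ onto the scalar $w$, the lemma reduces to a standard difference-quotient linearization in one real variable.
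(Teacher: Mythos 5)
Your proof is correct and follows essentially the same route as the paper: define the scalar difference quotient of $f$ in the aggregated variable $w=\int_E u(e)\gamma(x,e)\lambda(de)$, set $\kappa$ equal to that quotient times $\gamma$, and read off nonnegativity from \textbf{C6}--\textbf{C7} and the bound $L\vartheta(e)$ from the Lipschitz property. You are in fact slightly more careful than the paper on two minor points --- the convention on the set $\{w_u=w_v\}$ where the quotient is undefined, and the need to absorb the constant $L$ into $\vartheta$ to match the stated bound --- both of which the paper leaves implicit.
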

\begin{proof}
From Hypotheses {\bf C1} and {\bf C6}, we have
\begin{eqnarray*}
&& f(r,X^{t,x}_r,y,z,u)-f(r,X^{t,x}_r,y,z,v) \\
&&\quad = f\left(r,X^{t,x}_r,y,z, \int_E u(e) \gamma(x,e) \lambda(de) \right) - f\left(r,X^{t,x}_r,y,z, \int_E v(e) \gamma(x,e) \lambda(de) \right) \\
&&\quad =  \int_E (u(e)-v(e)) F^{t,x,y,z,u,v}_r\gamma(x,e) \lambda(de) \\
&& \quad =  \int_E (u(e)-v(e)) \kappa_r^{t,x,y,z,u,v}(e) \lambda (de),
\end{eqnarray*}
with 
$$F^{t,x,y,z,u,u'}_r = \frac{ f\left(r,X^{t,x}_r,y,z, \int_E u(e) \gamma(x,e) \lambda(de) \right) - f\left(r,X^{t,x}_r,y,z, \int_E v(e) \gamma(x,e) \lambda(de) \right)}{\int_E (u(e)-v(e)) \gamma(x,e) \lambda(de)}$$
and 
$$\kappa_t^{x,y,z,u,v}(e) = F^{t,x,y,z,u,v}_t\gamma(x,e).$$
Since $f$ is non decreasing and from {\bf C7}, $\kappa_r^{t,x,y,z,u,v}(e) \geq 0$ and from the Lipschitz condition, 
$$|\kappa_r^{t,x,y,z,u,v}(e)| \leq L \vartheta(e).$$
This achieves the proof.
\end{proof}

The previous lemma implies that $f$ is Lipschitz continuous w.r.t. $u$ uniformly in $\omega$, $t$, $y$ and $z$: 
$$|f(t,x,y,z,u)-f(t,x,y,z,v)| \leq L \|\vartheta\|_{L^2_\la} \|u-v\|_{L^2_\la}.$$
Hence we can apply Theorems 1 and 2, together with Proposition 2 in \cite{krus:popi:14} and deduce the existence and the uniqueness of the solution of the BSDE under suitable integrability conditions on $\xi$ and $f^0$. Moreover we can compare two solutions of the BSDE \eqref{eq:gene_BSDE} with different terminal conditions (see Theorem 4.1 and Assumption 4.1 in \cite{quen:sule:13} or Proposition 4 in \cite{krus:popi:14}). 

The assumptions {\bf C8}, {\bf C9} and {\bf C10} are used to deal with singular terminal condition $\xi$, that is when $\Prb(\xi=+\infty)>0$.
\begin{rem} 
Assumption {\bf C8} implies that the function $a$ must be bounded. 
\end{rem}
\begin{lem}\label{lem:a_priori_estimate_cond}
For any $\eta >0$ and $\ell > 0$
\begin{equation}\label{eq:alpha_gamma_2}
\E \int_0^T (T-s)^{-1+\eta} \left[ \left(\frac{1}{qa_s}\right)^{1/q} + (T-s)^{1+1/q} f^0_s \right]^{\ell}ds < +\infty.
\end{equation}
\end{lem}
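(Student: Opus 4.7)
The plan is to exploit Assumption \textbf{C9} (polynomial growth of $1/a^{1/q}+f^0$) together with the standard moment estimate \eqref{eq:integ_X} on the forward process $X^{t,x}$, and to check that the singular weight $(T-s)^{-1+\eta}$ is integrable with a margin to spare.

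First, I would split the $\ell$-th power. Using the elementary inequality $(u+v)^\ell\le C_\ell(u^\ell+v^\ell)$, valid for every $\ell>0$ (with $C_\ell=1$ if $\ell\le 1$ and $C_\ell=2^{\ell-1}$ if $\ell\ge 1$), the integrand is bounded pointwise by
\[
C_\ell\,(T-s)^{-1+\eta}\!\left(\frac{1}{qa_s}\right)^{\ell/q} + C_\ell\,(T-s)^{-1+\eta+(1+1/q)\ell}\,(f^0_s)^{\ell}.
\]
Each exponent of $T-s$ is strictly greater than $-1$: the first is $-1+\eta$, and the second is $-1+\eta+(1+1/q)\ell$. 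Therefore, after applying Fubini, it suffices to show that $\E[(1/a_s)^{\ell/q}]$ and $\E[(f^0_s)^{\ell}]$ are bounded uniformly in $s\in[0,T]$.

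By \textbf{C9}, there exist $C,\delta\ge 0$ such that
\[
\frac{1}{a(s,y)^{1/q}} + f(s,y,0,0,0) \le C\bigl(1+|y|^\delta\bigr)
\]
for every $(s,y)\in[0,T]\times\R^d$. Substituting $y=X^{t,x}_s$ and raising to the appropriate power, both $(1/a_s)^{\ell/q}$ and $(f^0_s)^{\ell}$ are dominated by $C'\bigl(1+|X^{t,x}_s|^{\delta\ell/q}\bigr)$ and $C'\bigl(1+|X^{t,x}_s|^{\delta\ell}\bigr)$ respectively, for a new constant $C'$ depending only on $C,\ell,q,\delta$. The moment estimate \eqref{eq:integ_X} (which holds for every $p\ge 2$) together with $\sup_{s\le T}|X^{t,x}_s|\le |x|+\sup_{s\le T}|X^{t,x}_s-x|$ then yields
\[
\sup_{s\in[0,T]} \E\!\left[(1/a_s)^{\ell/q}\right] + \sup_{s\in[0,T]} \E\!\left[(f^0_s)^{\ell}\right] \le K(t,x,\ell,q,\delta)<+\infty.
\]

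Combining these bounds, we get
\[
\E\!\int_0^T (T-s)^{-1+\eta}\!\left[\left(\frac{1}{qa_s}\right)^{1/q}+(T-s)^{1+1/q}f^0_s\right]^{\ell}\!ds \le K'\!\int_0^T \bigl[(T-s)^{-1+\eta}+(T-s)^{-1+\eta+(1+1/q)\ell}\bigr]ds,
\]
and both time integrals are finite because the exponents exceed $-1$. There is no real obstacle here; the only small point of care is to cover the case $\ell<1$ when splitting the sum, which is handled by the sub-additivity form of the inequality, and to ensure that the $p$ needed in \eqref{eq:integ_X} is $\ge 2$, which is harmless since one may always enlarge the moment exponent.
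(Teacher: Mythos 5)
Your proof is correct and follows essentially the same route as the paper: both arguments reduce the claim, via \textbf{C9} and the moment estimate \eqref{eq:integ_X}, to uniform-in-$s$ moment bounds of the form $C(1+|x|^{\delta\ell})$, and then conclude from the integrability of $(T-s)^{-1+\eta}$ on $[0,T]$. Your extra care with the case $\ell<1$ and with Tonelli is a minor refinement of the same idea, not a different approach.
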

\begin{proof}
From integrability property \eqref{eq:integ_X} of $X$, {\bf C9} implies that $a^{-1/q}$ and $f^0$ belong to any $L^\delta((0,T)\times \Omega)$ for any $\delta > 1$. Hence:
\begin{eqnarray*}
&& \E \int_0^T (T-s)^{-1+\eta} \left[ \left(\frac{1}{qa^{t,x}_s}\right)^{1/q} + (T-s)^{1+1/q} f^{0,t,x}_s \right]^{\ell}ds\\
&& \quad  \leq C (1+|x|^{\delta \ell})  \E \int_0^T (T-s)^{-1+\eta} ds < +\infty
\end{eqnarray*}
for any $0 < \eta$. 
\end{proof}

This lemma implies in particular that there exists $\ell > 1$ such that
\begin{equation*} 
\E \int_0^T \left[ \left( \frac{1}{qa(r,X_r^{t,x})}\right)^{1/q}+ (T-r)^{1+1/q} f^{0,t,x}_r\right]^{\ell} dr  < +\infty
\end{equation*}
and from {\bf C10}, $\vartheta$ is in $\bL_\lambda^{\tilde \ell}$, where $\tilde \ell$ is the H\"older conjugate of $\ell$.

Finally with Condition {\bf C11}, we will deduce existence of a viscosity solution for the IPDE as in \cite{barl:buck:pard:97}, whereas {\bf C12} and {\bf C13} are assumed to ensure uniqueness of the viscosity solution. 

Now let us give two examples of generators $f$ satisfying Conditions {\bf (C)}. 
\begin{itemize}
\item Assume that $y\mapsto f(y)$ is a non increasing function of class $C^1$, with $f(0)\geq 0$ and such that for some constant $a > 0$ and any $y \geq 0$: $f(y) - f(0) \leq -a y|y|^q$.  Then {\bf (C)} holds. In particular, $f(y)=-y|y|^q$ (for some $q >0$) is a classical example.
\item In \cite{krus:popi:15} the generator related to the optimal closure portfolio strategy is given by:
\begin{eqnarray*}
&& f(t,x,y,u) = -\frac{y|y|^q}{q\eta(t,x)^{q}} + f^0(t,x). 
\end{eqnarray*}
The parameter $\eta > 0$ is the price impact parameter and $f^0 \geq 0$ is the risk measure of the open position. Here $a(t,x)= -\frac{1}{q\eta(t,x)^{q}}$ and $\eta$ and $f^0$ are continuous functions of polynomial growth.  
\end{itemize}

\subsection{A first link with viscosity solution of a IPDE}

First assume that $(t,x) \in [0,T] \times \R^d$ is fixed. Under Conditions {\bf C1} to {\bf C7}, from Theorems 1 or 2 in \cite{krus:popi:14}, there exists a unique solution for the truncated version of BSDE \eqref{eq:gene_BSDE}, where the terminal condition $\xi = g(X^{t,x}_T)$ is replaced by $\xi \wedge n = g(X^{t,x}_T)\wedge n = g_n(X^{t,x}_T)$ and where the generator $f$ is replaced by $f_n$ for some $n>0$:
$$f_n(r,y,z,u) = (f(r,X^{t,x}_r,y,z,u)-f^0_r) + (f^0_r \wedge n).$$
From {\bf B3}, for any $n \in \N^*$, $x \mapsto g_n(x) = g(x) \wedge n$ is a continuous function on $\R^d$. The solution of this truncated BSDE will be denoted by $(Y^{n,t,x},Z^{n,t,x},U^{n,t,x})$: for any $t \leq s \leq T$
\begin{eqnarray}\label{eq:trunc_BSDE}
Y^{n,t,x}_s& = & \xi\wedge n + \int_s^T f_n(r,Y^{n,t,x}_r,Z^{n,t,x}_r,U^{n,t,x}_r) dr -\int_s^T Z^{n,t,x}_r dW_r \\ \nonumber
& -& \int_s^T\int_E U^{n,t,x}_r(e)\tilde{\mu}(dr,de).
\end{eqnarray}
Moreover $(Y^{n,t,x},Z^{n,t,x},U^{n,t,x}) \in \bS^\delta(0,T)$ for any $\delta > 1$. 

If {\bf (C)} holds, we work with almost the same setting as in \cite{barl:buck:pard:97}. The only difference is that $f$ is not Lipschitz continuous w.r.t. $y$. But for a fixed $n$, a straightforward consequence of the comparison principle for BSDE implies that $Y^{n,t,x}$ is bounded by $n(T+1)$ (see Proposition 4 in \cite{krus:popi:14}). We can replace in the BSDE \eqref{eq:trunc_BSDE} our generator $f_n$ by $\widehat f_n$ with $\widehat f_n(t,x,y,z,q) = f_n(t,x,\mathcal{T}_n(y),z,q)$ with $\mathcal{T}_n(y)=(n(T+1)y)/(|y|\vee n(T+1))$. From Condition {\bf C4}, $\widehat f_n$ is Lipschitz w.r.t. $y$.

We will use the notion of viscosity solution of the IPDE \eqref{eq:IPDE}. The reason will be clearer later. For a locally bounded function $v$ in $[0,T]\times \R^d$, we define its upper (resp. lower) semicontinuous envelope $v^*$ (resp. $v_*$) by:
$$v^*(t,x) =\limsup_{(s,y)\to (t,x)} v(s,y) \quad (\mbox{resp. } v_*(t,x) =\liminf_{(s,y)\to (t,x)} v(s,y)).$$
For such equation \eqref{eq:IPDE} we introduce the notion of viscosity solution as in \cite{alva:tour:96} (see also Definition 3.1 in \cite{barl:buck:pard:97} or Definitions 1 and 2 in \cite{barl:imbe:08}). Since we do not assume the continuity of the involved function $u$, we adapt the definition of discontinuous viscosity solution (see Definition 4.1 and 5.1 in \cite{hama:zhao:14}). 
\begin{defin} \label{def:global_visc_sol}
A locally bounded function $v$ is 
\begin{enumerate}
\item \textbf{a viscosity subsolution} of \eqref{eq:IPDE} if it is upper semicontinuous (usc) on $[0,T)\times \R^d$ and if for any $\phi \in C^2([0,T] \times \R^d)$ wherever $(t,x) \in [0,T)\times \R^d$ is a global maximum point of $v-\phi$,
\begin{equation*}
-\frac{\partial }{\partial t} \phi(t,x) - \ope \phi(t,x) - \mathcal{I}(t,x,\phi) -f(t,x,v,(\nabla \phi)\sigma,\mathcal{B}(t,x,\phi)) \leq 0.
\end{equation*}
\item \textbf{a viscosity supersolution} of \eqref{eq:IPDE} if it is lower semicontinuous (lsc) on $[0,T)\times \R^d$ and if for any $\phi \in C^2([0,T] \times \R^d)$ wherever $(t,x) \in [0,T)\times \R^d$ is a global minimum point of $v-\phi$,
\begin{equation*}
-\frac{\partial }{\partial t} \phi(t,x) - \ope \phi(t,x) -  \mathcal{I}(t,x,\phi) -f(t,x,v,(\nabla \phi)\sigma,\mathcal{B}(t,x,\phi))  \geq 0.
\end{equation*}
\item \textbf{a viscosity solution} of \eqref{eq:IPDE} if its upper envelope $v^*$ is  a subsolution and if its lower envelope $v_*$ is a supersolution of \eqref{eq:IPDE}.
\end{enumerate}
\end{defin}
This definition is equivalent to Definition 4.1 in \cite{hama:zhao:14}. We can also give another definition like Definition 5.1 in \cite{hama:zhao:14}. For any $\delta > 0$, the operators $\mathcal{I}$ and $\mathcal{B}$ will be split in two parts:
\begin{eqnarray*}
\mathcal{I}^{1,\delta} (t,x,\phi) & = & \int_{|e|\leq \delta} [\phi(t,x+\beta(x,e))-\phi(t,x) - (\nabla \phi)(t,x)\beta(x,e)] \lambda(de)\\
\mathcal{I}^{2,\delta} (t,x,p,\phi) & = & \int_{|e|> \delta} [\phi(t,x+\beta(x,e))-\phi(t,x) - p \beta(x,e)] \lambda(de),\\
\mathcal{B}^{\delta} (t,x,\phi,v) & = &  \int_{|e|\leq \delta} [\phi(t,x+\beta(x,e))-\phi(t,x)] \gamma(x,e)\lambda(de)\\
&& +  \int_{|e|> \delta} [v(t,x+\beta(x,e))-v(t,x)] \gamma(x,e) \lambda(de).\\
\end{eqnarray*}
\begin{defin} \label{def:local_visc_sol}
A locally bounded and upper (resp. lower) semicontinuous function $v$ is \textbf{a viscosity sub (resp. super) solution} of \eqref{eq:IPDE} if for any $\delta > 0$, for any $\phi \in C^2([0,T] \times \R^d)$ wherever $(t,x) \in [0,T)\times \R^d$ is a global maximum (resp. minimum) point of $v-\phi$ on $[0,T]\times B(x,R_\delta)$,
\begin{eqnarray*}
&& -\frac{\partial }{\partial t} \phi(t,x) - \ope \phi(t,x) - \mathcal{I}^{1,\delta}(t,x,\phi) - \mathcal{I}^{2,\delta}(t,x,\nabla \phi,v) \\
&& \qquad \qquad- f(t,x,v,(\nabla \phi)\sigma,\mathcal{B}^\delta(t,x,\phi,v))\leq 0 \ (\mbox{resp. } \geq 0).
\end{eqnarray*}
\end{defin}
We refer to Remark 3.2 and Lemma 3.3 in \cite{barl:buck:pard:97}, to condition (NLT), Proposition 1 and Section 2.2 in \cite{barl:imbe:08} and to Appendix in \cite{hama:zhao:14} for the discussion (and the proof) on the equivalence between Definitions \ref{def:global_visc_sol} and \ref{def:local_visc_sol}. 

In these two definitions the terminal condition $u(T,.)=g$ is not implied. For the Cauchy problem \eqref{eq:IPDE} with $u(T,.)=g$ where $g$ is a bounded\footnote{This condition can be relaxed. See for example Condition 3.3 in \cite{barl:buck:pard:97}.} and continuous solution, we say that a {\bf comparison principle} holds if: for two functions $u$ and $v$, 
\begin{itemize}
\item $u$ is locally bounded and lsc (resp. $v$ is locally bounded and usc) on $[0,T] \times \R^d$ ;
\item $u$ is a subsolution (resp. $v$ is a supersolution) of \eqref{eq:IPDE} ;
\item $u(T,x) \leq g(x)$ (resp. $v(T,x) \geq g(x)$) ;
\end{itemize}
then $u\leq v$ on $[0,T] \times \R^d$. A comparison principle has two immediate consequences. First if $v$ is a viscosity solution of \eqref{eq:IPDE} such that $v^*(T,.) \leq g \leq v_*(T,.)$ on $\R^d$, then $v$ is a continuous function. Second uniqueness of a continuous and bounded viscosity solution holds.

\vspace{0.5cm}
From Theorem 3.4 and Theorem 3.5 in \cite{barl:buck:pard:97}, we have directly the next result. 
\begin{prop}
Under conditions {\bf (A)} on the coefficients of the SDE \eqref{eq:SDE} and assumptions {\bf (B)} and {\bf (C)} on the terminal condition and on the generator of the BSDE \eqref{eq:gene_BSDE}, the function $u_n(t,x) := Y^{n,t,x}_t$, $(t,x) \in [0,T]\times \R^d$, is the unique bounded (by $n(T+1)$) continuous viscosity solution of \eqref{eq:IPDE} with generator $f_n$ and with terminal condition $u_n(T,.)=g_n$.
\end{prop}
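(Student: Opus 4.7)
The plan is to reduce the statement to a direct application of Theorems 3.4 and 3.5 of \cite{barl:buck:pard:97}, which are stated for a globally Lipschitz generator acting on a bounded continuous terminal datum. Since our standing hypothesis {\bf C4} only provides local Lipschitz continuity of $f$ (hence of $f_n$) in $y$, the sole real work is to produce an a priori $\bL^\infty$ bound on $Y^{n,t,x}$ that enables a truncation argument; everything else is a matter of verifying that Conditions {\bf (A)}, {\bf (B)} and {\bf (C)} match the standing assumptions of \cite{barl:buck:pard:97}.

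\textbf{Reduction to a globally Lipschitz driver.} Since $0 \leq f^0_r \wedge n \leq n$ on $[0,T]$ and $0 \leq g_n(X^{t,x}_T) \leq n$, the BSDE comparison principle of Proposition 4 in \cite{krus:popi:14} — whose hypotheses are granted by {\bf C3}, {\bf C5}, {\bf C6}, {\bf C7} together with Lemma \ref{lem:comp_gene} — gives
\begin{equation*}
0 \;\leq\; Y^{n,t,x}_s \;\leq\; n(T+1), \qquad \forall s\in [t,T], \quad \Prb\text{-a.s.}
\end{equation*}
Consequently, substituting $f_n(r,y,z,u)$ with $\widehat f_n(r,y,z,u) := f_n(r,\mathcal{T}_n(y),z,u)$, where $\mathcal{T}_n(y) = (n(T+1)y)/(|y|\vee n(T+1))$, leaves the solution triple $(Y^{n,t,x},Z^{n,t,x},U^{n,t,x})$ invariant, while {\bf C4} renders $\widehat f_n$ globally Lipschitz in $y$ with constant $L_{n(T+1)}$.

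\textbf{Matching the Barles--Buckdahn--Pardoux framework.} The forward coefficients $b,\sigma,\beta$ satisfy {\bf (A)}, which coincides with the SDE hypotheses in \cite{barl:buck:pard:97}. On the driver side: $\widehat f_n$ is continuous in $t$ by {\bf C11}, admits a modulus of continuity in $x$ on balls by {\bf C12}, is Lipschitz in $z$ by {\bf C5}, and through Lemma \ref{lem:comp_gene} is non-decreasing and Lipschitz in $u \in \bL^2_\lambda$ with kernel dominated by $\vartheta$ under {\bf C6}--{\bf C7}. The kernel $\gamma$ governing the non-local operator $\mathcal{B}$ is non-negative, bounded by $\vartheta$, and Lipschitz in $x$ with modulus $C_\gamma(1\wedge |e|^2)$ by {\bf C13}, so that $\mathcal{I}$ and $\mathcal{B}$ are well-defined and regular in the sense required in \cite{barl:buck:pard:97}. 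Finally the terminal $g_n$ is continuous by {\bf B3} and bounded by $n$.

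\textbf{Conclusion.} Theorem 3.4 of \cite{barl:buck:pard:97} now applies to the reduced Markovian FBSDE with data $(\widehat f_n, g_n)$: it asserts that $u_n(t,x) := Y^{n,t,x}_t$ is continuous on $[0,T]\times \R^d$, equals $g_n$ at $t=T$, and is a viscosity solution of \eqref{eq:IPDE} with driver $\widehat f_n$; by Step~1 it is bounded by $n(T+1)$, and on the range $\{|y|\leq n(T+1)\}$ the drivers $\widehat f_n$ and $f_n$ coincide, so $u_n$ is equally a viscosity solution of \eqref{eq:IPDE} with driver $f_n$. Uniqueness within bounded continuous solutions is exactly the content of the comparison result of Theorem 3.5 in \cite{barl:buck:pard:97}, whose hypotheses are the ones just verified. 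The only delicate point — and hence the main obstacle to articulate cleanly — is the truncation in $y$: one must combine the probabilistic $\bL^\infty$ bound with the observation that the viscosity PDE for $u_n$ only ``sees'' values of the driver on the range of $Y^n$, so that passing from $\widehat f_n$ back to $f_n$ preserves both existence and comparison.
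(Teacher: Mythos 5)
Your proposal is correct and follows essentially the same route as the paper: the paper likewise obtains the bound $Y^{n,t,x}\leq n(T+1)$ from the comparison principle (Proposition 4 in \cite{krus:popi:14}), replaces $f_n$ by the truncated generator $\widehat f_n(t,x,y,z,q)=f_n(t,x,\mathcal{T}_n(y),z,q)$ to restore global Lipschitz continuity in $y$, and then invokes Theorems 3.4 and 3.5 of \cite{barl:buck:pard:97} directly. Your additional remarks on matching the hypotheses and on passing from $\widehat f_n$ back to $f_n$ only make explicit what the paper leaves implicit.
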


\subsection{Known results on singular BSDE}

In \cite{krus:popi:15}, we extend the result of \cite{popi:06} and \cite{anki:jean:krus:13} concerning BSDE with a singular terminal condition, i.e. when $\Prb(\xi=+\infty) > 0$. Note that the special structure {\bf C1} of the generator is useless here.

\begin{prop}[\cite{krus:popi:15}, Theorem 1] 
Under Conditions {\bf C2} to {\bf C10}, the sequence of processes $(Y^{n,t,x},Z^{n,t,x},U^{n,t,x})$ converges to $(Y^{t,x},Z^{t,x},U^{t,x})$ on $\bS^\ell(t,r)$ for any $t \leq r < T$ and
\begin{itemize}
\item $Y^{t,x}_r \geq 0$ a.s. for any $t \leq r \leq T$.
\item $(Y^{t,x},Z^{t,x},U^{t,x})$ belongs to $\bS^\ell(t,r)$ for any $ t \leq r < T$. 
\item For all $t \leq s \leq s' < T$:
\end{itemize}
\begin{equation*}
Y^{t,x}_s=Y^{t,x}_{s'} + \int_s^{s'} f(r,X^{t,x}_r,Y^{t,x}_r,Z^{t,x}_r,U^{t,x}_r) dr-\int_s^{s'} Z^{t,x}_r dW_r-\int_s^{s'} \int_E U^{t,x}_r(e) \tilde{\mu}(dr,de).
\end{equation*}
\begin{itemize}
\item $(Y^{t,x},Z^{t,x},U^{t,x})$ is a super-solution in the sense that: a.s. 
\end{itemize}
\begin{equation} \label{eq:term_cond_super_sol}
\liminf_{r\to T} Y^{t,x}_r \geq \xi = g(X^{t,x}_T).
\end{equation}
\end{prop}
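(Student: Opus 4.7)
The plan is to follow the classical monotone-supersolution construction. First observe that $\xi \wedge n$ and $f^{0,t,x}_r \wedge n$ are both non-decreasing in $n$ and that the truncated driver $f_n$ is otherwise independent of $n$; by the comparison principle for jump BSDEs (Proposition 4 of \cite{krus:popi:14}, whose hypotheses are met thanks to Lemma \ref{lem:comp_gene}), this yields $0 \leq Y^{n,t,x} \leq Y^{n+1,t,x}$ for every $n$, so that the pointwise increasing limit
\[
Y^{t,x}_s := \lim_{n \to \infty} Y^{n,t,x}_s \;\in\; [0,+\infty]
\]
is well defined on $[t,T]$.

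The crucial step is an a priori upper bound on $Y^{n,t,x}_s$, uniform in $n$ and finite on $[t,T)$. To establish it I would apply It\^o's formula to $\phi(Y^{n,t,x})$ for a well-chosen convex $\phi$ (typically a smooth truncation of $y \mapsto y^{1+q}$), and then use \textbf{C8} to expose the dissipative term $-a_r(Y^{n,t,x}_r)^{1+q}$. Assumption \textbf{C3} absorbs the linear contribution in $y$; \textbf{C5} allows the $Z^{n,t,x}$ term to be controlled via Young's inequality against the It\^o quadratic variation; Lemma \ref{lem:comp_gene} combined with \textbf{C7} and \textbf{C10} handles the compensated jump part using $\vartheta \in \bL^{\tilde\ell}_\la$. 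The outcome should be an estimate of the form
\[
Y^{n,t,x}_s \;\leq\; C\,\E\!\left[ \int_s^T \left(\frac{1}{q\, a^{t,x}_r}\right)^{1/q} (T-r)^{-1-1/q}\, dr \;+\; \int_s^T f^{0,t,x}_r\, dr \;\Big|\; \tri_s\right],
\]
whose right-hand side is finite by Lemma \ref{lem:a_priori_estimate_cond} together with \textbf{C9} and, crucially, independent of $n$. In particular $Y^{t,x}_s < +\infty$ almost surely for every $s < T$.

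This uniform bound then propagates to the martingale components: fixing $r < T$ and applying It\^o to $(Y^{n,t,x}-Y^{m,t,x})^2$ on $[t,r]$, one uses \textbf{C3}, \textbf{C5} and Lemma \ref{lem:comp_gene} to control the driver difference, and the $\bL^\ell$-convergence of $Y^{n,t,x}_r$ (obtained by dominated convergence against the majorant above) to conclude that $(Y^{n,t,x}, Z^{n,t,x}, U^{n,t,x})_n$ is Cauchy in $\bS^\ell(t,r)$. Passing to the limit in \eqref{eq:trunc_BSDE} written on $[s, s']$ with $t \leq s \leq s' < T$ delivers the integral equation for the limit triple on every subinterval of $[t,T)$. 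Finally, for the terminal condition \eqref{eq:term_cond_super_sol}, the \cad property of each $Y^{n,t,x}$ together with $Y^{n,t,x}_T = g(X^{t,x}_T) \wedge n$ gives
\[
\liminf_{r \to T} Y^{t,x}_r \;\geq\; \liminf_{r \to T} Y^{n,t,x}_r \;\geq\; g(X^{t,x}_T)\wedge n,
\]
and letting $n \to \infty$ concludes.

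The hard part will be the a priori upper bound: assumptions \textbf{C8}--\textbf{C10} are calibrated precisely so that the singular-order dissipation coming from $-a(t,x)y^{q+1}$ dominates, uniformly in $n$, every other contribution produced by It\^o's formula. This is what allows $Y^{n,t,x}_s$ to stay finite as $n \to \infty$ for $s<T$ in spite of the singular terminal data, and it is the technical heart of the whole construction; the rest (monotonicity, Cauchy argument, Fatou) is by comparison routine.
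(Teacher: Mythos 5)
Your overall architecture is the right one and matches the proof of this result in \cite{krus:popi:15} (the paper itself does not reprove the proposition; it only recalls the key estimate \eqref{eq:a_priori_estimate}): monotone approximation via comparison, a uniform-in-$n$ a priori bound finite for $s<T$, a Cauchy argument in $\bS^\ell(t,r)$ for $r<T$, passage to the limit in the truncated equation, and a Fatou-type argument for \eqref{eq:term_cond_super_sol}. You also correctly identify the a priori bound as the technical heart. The problem is that the bound you display is wrong as written: its right-hand side contains $\int_s^T \left(\frac{1}{qa_r}\right)^{1/q}(T-r)^{-1-1/q}\,dr$, and $(T-r)^{-1-1/q}$ is not integrable at $T$, so unless $a_r^{-1/q}$ degenerates near $T$ (which is not assumed; \textbf{C9} only gives polynomial growth) this quantity is $+\infty$ almost surely. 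It is therefore not "finite by Lemma \ref{lem:a_priori_estimate_cond}" --- that lemma controls $\E\int_0^T(T-s)^{-1+\eta}[\cdots]^\ell ds$ with an \emph{integrable} singularity $-1+\eta$ --- and a divergent majorant cannot deliver $Y^{t,x}_s<+\infty$ for $s<T$, which is exactly what this step must produce. The correct estimate is \eqref{eq:a_priori_estimate}: the singular factor $(T-s)^{-1-1/q}$ sits \emph{outside} the conditional expectation, multiplying the $\ell$-th root of a finite conditional moment, reflecting the heuristic blow-up rate $\left(\frac{1}{qa(T-s)}\right)^{1/q}$ of the ODE $y'=ay^{1+q}$ with $y(T)=+\infty$.

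Relatedly, the mechanism you propose for obtaining the bound (It\^o applied to a convex truncation of $y\mapsto y^{1+q}$, with \textbf{C8} exposing the dissipation) does not by itself produce the sharp $(T-s)^{-1/q}$ rate. The argument in \cite{krus:popi:15} instead linearizes the generator in $(z,u)$ --- via \textbf{C5} and Lemma \ref{lem:comp_gene}, i.e.\ a Girsanov-type change of measure whose density is controlled through $\vartheta\in\bL^{\tilde\ell}_\la$ (\textbf{C10}) and H\"older's inequality with exponents $\ell,\tilde\ell$ --- and then compares $Y^{n,t,x}$ with an explicit supersolution built from $\left(\frac{1}{qa_r(T-r)}\right)^{1/q}$ plus a correction involving $f^0$; this is where the constant $K_{\ell,L,\vartheta}$ and the precise shape of \eqref{eq:a_priori_estimate} come from. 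The remaining steps of your proposal (monotonicity, non-negativity from $f^0\geq 0$, the Cauchy argument on $[t,r]$ with $r<T$, and the terminal inequality $\liminf_{r\to T}Y^{t,x}_r\geq\xi\wedge n$ using that the Poisson measure has a.s.\ no atom at the fixed time $T$, then $n\to\infty$) are sound and routine once the correct a priori bound is in place.
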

\begin{defin}
Any process $(\tilde Y, \tilde Z, \tilde U)$ satisfying the previous four items is called \textbf{super-solution} of the BSDE \eqref{eq:gene_BSDE} with singular terminal condition $\xi$. 
\end{defin}
In \cite{krus:popi:15}, we have also proved minimality of the constructed solution in the sense that if $(\widetilde Y, \widetilde Z, \widetilde U)$ is another non negative super-solution, then for all $r \in [t,T]$, $\Prb$-a.s. $\widetilde Y_r \geq Y^{t,x}_r$. 

Let us precise immediately that from the second and/or third items $Y$ is \cad\footnote{French acronym for right-continuous with left limit.} on $[0,T[$. This problem is studied in \cite{popi:16}. But in general we do not know if $Y$ has a left limit at time $T$.

A key point of the proof is the a priori estimate: for any $n$, a.s. for any $t \leq s \leq T$
\begin{equation}\label{eq:a_priori_estimate}
Y^{n,t,x}_s \leq Y^{t,x}_s \leq \frac{K_{\ell,L,\vartheta}}{(T-s)^{1+1/q}} \left\{\E \left( \ \int_s^{T} \left[ \left(\frac{1}{qa_r}\right)^{1/q} + (T-r)^{1+1/q} f^0_r \right]^{\ell} dr \bigg| \tri_s\right) \right\}^{1/\ell}
\end{equation}
where $K_{\ell,L,\vartheta}$ is a non negative constant depending only on $\ell$, $L$ and $\vartheta$ and this constant is a non decreasing function of $L$ and $\vartheta$ and a non increasing function of $\ell$.

\section{Existence of a minimal viscosity solution with singular data}

The minimal solution $Y^{t,x}$ of the singular BSDE \eqref{eq:gene_BSDE} is obtained as the increasing limit of $Y^{n,t,x}$: for any $t \leq s \leq T$
$$\lim_{n\to +\infty}  Y^{n,t,x}_s = Y^{t,x}_s.$$
And it is well known that viscosity solutions are stable by monotone limit. That is the reason why we use this notion of weak solutions. 

We define the function $u$ by:
$$u(t,x) = Y^{t,x}_t.$$
Therefore the sequence $u_n(t,x)$ converges to $u(t,x)$. Since $a$ and $f^0$ depend only on $X^{t,x}$, using Condition {\bf C9} and Property \eqref{eq:integ_X}, the a priori estimate \eqref{eq:a_priori_estimate} becomes: there exist two constants $K>0$ and $\delta >0$ such that for all $(t,x)\in [0,T] \times \R^d$:
\begin{equation} \label{eq:majorationu}
0\leq u_n(t,x) \leq u(t,x) \leq \frac{K}{(T-t)^{1/q}}(1+|x|^{\delta}).
\end{equation}
Since $u_n$ is a continuous function, the function $u$ is lower semi-continuous on $[0,T]\times \R^d$ and satisfies for all $x_0\in \R^d$:
\begin{equation} \label{eq:term_cond_liminf}
\liminf_{(t,x)\to (T,x_0)} u(t,x) \geq g(x_0).
\end{equation}

\subsection{Minimal viscosity solution}

The aim of this section is to prove the following result.
\begin{thm} \label{thm:visc_sol}
Under conditions {\bf (A)-(B)-(C)}, $u(t,x) = Y^{t,x}_t$ is a viscosity solution of the IPDE \eqref{eq:IPDE} on $[0,T[ \times \R^d$. Moreover $u$ is the minimal viscosity solution among all non negative solutions satisfying \eqref{eq:term_cond_liminf}.
\end{thm}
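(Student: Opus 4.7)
The plan is to obtain $u$ as the pointwise nondecreasing limit of the continuous bounded viscosity solutions $u_n(t,x)=Y^{n,t,x}_t$ furnished by the previous proposition, and then to derive the two conclusions from (i) a stability theorem for viscosity solutions of IPDEs under monotone limits of the equation, for existence, and (ii) a comparison principle between $u_n$ and any non-negative competitor $v$, for minimality. The a priori bound \eqref{eq:majorationu} makes $u$ finite and locally bounded on $[0,T)\times\R^d$, and since each $u_n$ is continuous with $u_n\uparrow u$, the limit $u$ is lsc, so $u_*=u$; a direct check, using continuity of each $u_n$ on one side and monotone pointwise convergence on the other, shows that the Barles--Perthame half-relaxed limits satisfy $\liminf_* u_n = u$ and $\limsup^* u_n = u^*$.

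For the viscosity property I would work with the equivalent Definition \ref{def:local_visc_sol}, which splits the nonlocal operator into a piece $\cI^{1,\delta}(t,x,\phi)$ depending only on the test function and a remainder $\cI^{2,\delta}(t,x,\nabla\phi,v)$ integrated on the $\lambda$-finite set $\{|e|>\delta\}$, with an analogous splitting of $\cB$. The standard half-relaxed-limits argument of \cite{barl:imbe:08} and \cite{hama:zhao:14} then yields $u^*$ as subsolution and $u$ as supersolution of \eqref{eq:IPDE}: for $\phi\in C^2$ realizing a strict extremum of $u^*-\phi$ (resp. $u-\phi$) on a ball $B(x_0,R_\delta)$, extract extrema $(t_k,x_k)$ of $u_{n_k}-\phi$ converging to the base point with $u_{n_k}(t_k,x_k)$ converging to $u^*(t_0,x_0)$ (resp. $u(t_0,x_0)$), and pass to the limit in the viscosity inequality written for $u_{n_k}$ against the generator $f_{n_k}$. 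The monotone pointwise convergence $f_n\uparrow f$, the local Lipschitz properties {\bf C4}-{\bf C6}, dominated convergence inside $\cB^\delta$ via {\bf C7} and {\bf C10}, and the continuity conditions {\bf C11}-{\bf C13} together allow the inequality to pass to the limit. The terminal liminf \eqref{eq:term_cond_liminf} has already been recorded.

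For minimality let $v\geq 0$ be any other viscosity solution satisfying $v_*(T,x)\geq g(x)$ for all $x$. Since $f_n=(f-f^0)+(f^0\wedge n)\leq f$ and the supersolution inequality is preserved when the generator is decreased, $v_*$ is also a lsc supersolution of the IPDE with $f$ replaced by $f_n$, and $u_n(T,\cdot)=g_n\leq g\leq v_*(T,\cdot)$. I would then invoke a comparison principle, adapted from \cite{barl:buck:pard:97} in the spirit of \cite{hama:zhao:14}, between the bounded continuous subsolution $u_n$ and the lsc (possibly unbounded) supersolution $v_*$ of the same IPDE, to conclude $u_n\leq v_*\leq v$ on $[0,T)\times\R^d$, and then let $n\to\infty$ to obtain $u\leq v$.

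The main obstacle is precisely this last comparison step, because the standard statements for IPDEs assume both sides to be bounded or polynomially growing, whereas $v_*$ carries no a priori upper bound. The strong dissipation of $f$ enforced by {\bf C8}, combined with the monotonicity {\bf C3}, should rescue the argument: they allow the usual doubling-of-variables and penalization machinery to absorb $v_*$ by localizing on each slab $[0,T-\eps]\times B(0,R)$, inside which $u_n$ is bounded and \eqref{eq:majorationu} controls $u$, after which one lets $R\to\infty$ and $\eps\to 0$. A secondary subtlety at $t=T$ is that $g=+\infty$ on $\cS$, so the terminal inequality is read in $\R_+\cup\{+\infty\}$; one checks that the doubling-variables maxima automatically avoid $\cS$ at the final time once $v_*(T,\cdot)\geq g$ is used.
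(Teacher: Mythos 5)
Your existence argument coincides with the paper's: monotone approximation by the bounded continuous solutions $u_n$, collapse of the half-relaxed limits to $u$ and $u^*$ thanks to monotonicity and continuity of each $u_n$, the uniform bound \eqref{eq:majorationu} on $[0,T-\eps]\times\R^d$, and stability in the sense of Definition \ref{def:local_visc_sol}. The minimality strategy --- reduce everything to $u_n\leq v_*$ and let $n\to\infty$ --- is also the paper's. However, the rescue mechanism you propose for the comparison step is not the one that works, and as written it has a gap.

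First, the obstacle is not that $v_*$ has no upper bound, and {\bf C8} plays no role here: to run the doubling-of-variables argument for $u_n\leq v_*$ one only needs $u_n-v_*$ bounded from above, which is automatic since $u_n\leq n(T+1)$ and $v\geq 0$. Second, localizing in time on a slab $[0,T-\eps]\times B(0,R)$ is circular: a parabolic comparison on that slab requires the ordering $u_n(T-\eps,\cdot)\leq v_*(T-\eps,\cdot)$ at its terminal face, which is precisely what you are trying to prove. The paper's resolution is different: replace $u_n$ by $u_{n,\eps}(t,x)=u_n(t,x)-\eps/t$, which is a \emph{strict} subsolution with margin $\eps/T^2$; double variables on all of $[0,T]\times\R^d\times\R^d$ with the penalization $\frac{\eta}{2}|x-y|^2+\varrho(|x|^2+|y|^2)$ so that the supremum of $u_{n,\eps}(t,x)-v_*(t,y)-\phi(x,y)$ is attained; exclude the maximizing time from $t=0$ (where $u_{n,\eps}=-\infty$) and from $t=T$ (where $u_{n,\eps}\leq g_n\leq g\leq v_*$ by \eqref{eq:term_cond_liminf}, on $\cS$ as well as off it); and only then apply the nonlocal Jensen--Ishii lemma of Barles--Imbert together with {\bf C3}, {\bf C5}, {\bf C6}, {\bf C12}, {\bf C13} to contradict the $\eps/T$ margin as $\delta\to0$, $\varrho\to0$, $\eta\to\infty$. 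Your sketch does gesture at the right terminal-time mechanism, but without the $\eps/t$ perturbation and the global-in-time formulation the comparison does not close.
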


Note that we do not prove the continuity of $u$ because of the lack of uniform convergence of the approximating sequence $u_n$. But we are also not able to show that $u$ is discontinuous.

\begin{proof}
In order to prove that $u$ is a viscosity solution, the main tool is the half-relaxed upper- and lower-limit of the sequence of functions $\left\{ u_{n} \right\}$, i.e.
$$\overline{u}(t,x) = \limsup_{{n \rightarrow + \infty}\atop{(t',x') \rightarrow (t,x)}} u_{n}(t',x') \quad 
\mbox{and} \quad \underline{u}(t,x) = \liminf_{{n \rightarrow + \infty}\atop{(t',x') \rightarrow (t,x)}} u_{n}(t',x').$$
In our case, $u_*=\underline{u} = u \leq  \overline{u} = u^{*}$ because the sequence $\left\{ u_{n} \right\}$ is non decreasing and $u_{n}$ is continuous for all $n \in \N^{*}$. From the estimate \eqref{eq:majorationu}, for all $\eps > 0$, there exists a constant $K_\eps$ such that for every $n \in \N^{*}$ and all $(t,x) \in [0,T-\eps] \times \R^{d}$, 
\begin{equation} \label{eq:majorationu_2}
0\leq u_{n}(t,x) \leq u(t,x) \leq K_\eps(1+|x|^\delta).
\end{equation}
Hence $u^*$ also satisfies \eqref{eq:majorationu} and \eqref{eq:majorationu_2}. In other words $u_n$, $u$ and $u^*$ belong to $\Pi_{pg}(0,T-\eps)$ and the upper bound does not depend on the (singular or not) terminal condition.

Since $u_{n}$ is a viscosity solution of the IPDE \eqref{eq:IPDE}, passing to the limit with a stability result, we can obtain that $u$ (resp. $u^*$) is a supersolution (resp. subsolution) of \eqref{eq:IPDE} on $[0,T[ \times \R^{d}$. The details are left to the reader (see the proof of Theorem 4.1 in \cite{hama:zhao:14} or the results in \cite{alva:tour:96}, \cite{barl:imbe:08} or \cite{bens:saya:02}).

We want to prove minimality of the viscosity solution obtained by approximation among all non negative viscosity solutions. Let us consider a non negative viscosity solution $v$ (in the sense of Definition \ref{def:local_visc_sol}) with terminal condition \eqref{eq:term_cond_liminf}. We prove that for all integer $n$: for all $(t,x) \in [0,T] \times \R^{d}$, $u_{n}(t,x) \leq v_{*}(t,x)$. We deduce that $u \leq v_{*} \leq v$. This result ($u_n \leq v_*$) seems to be a direct consequence of a well-known maximum principle for viscosity solutions (see \cite{barl:94} or \cite{cran:ishi:lion:92} when $\mathcal I=0$, \cite{barl:buck:pard:97}, \cite{barl:imbe:08} or \cite{hama:zhao:14} in general). But to the best of our knowledge, this principle was not proved for solutions which can take the value $+ \infty$ at time $T$, except in \cite{popi:06} in the Brownian setting and for $f(y)= -y|y|^q$. Let us adapt the proof of Proposition 23 in \cite{popi:06} to the IPDE \eqref{eq:IPDE}. The key point is to avoid the terminal time $T$. 

To simplify the notation, we will denote $F$ the following function on $[0,T] \times \R^d \times \R \times \R^d \times \mathbb{S}_d \times \R^2$:
\begin{equation} \label{eq:IPDE_notation}
F(t,x,u,p,X,I,B) = -p b(x) - \frac{1}{2}\tr( X (\sigma \sigma^*)(x)) - I - f(t,x,u,p \sigma(x),B).
\end{equation}
$\mathbb{S}_d$ is the set of symmetric matrices of size $d\times d$.

The beginning of the proof is exactly the same: we fix $\eps > 0$ and $n \geq 1$ and we define $u_{n,\eps} (t,x) = u_{n}(t,x) - \frac{\eps}{t}$. We prove that $u_{n,\eps} \leq v_{*}$ for every $\eps$, hence we deduce $u_{n} \leq v_{*}$. Then we argue by contradiction: we suppose that there exists $(s,z) \in [0,T] \times \R^{d}$ such that $u_{n,\eps}(s,z) - v_{*}(s,z) \geq \nu > 0$. First of all, it is clear that $s$ is not equal to $0$ or $T$, because $u_{n,\eps}(0,z) = -\infty$ and $v_{*}(T,z) \geq g(z)$ from \eqref{eq:term_cond_liminf}. Next the functions $u_{n,\eps}$ and $-v_{*}$ are bounded from above on $[0,T] \times \R^{d}$ respectively by $n(T+1)$ and $0$. Thus, for $(\eta, \varrho) \in (\R^{*})^{2}$, if we define:  
$$m(t,x,y) = u_{n,\eps}(t,x) - v_{*}(t,y) - \frac{\eta}{2} |x-y|^{2} - \varrho \left( |x|^{2} + |y|^{2} \right),$$
$m$ has a supremum $M_{\eta, \varrho}$ on $[0,T] \times \R^{d} \times \R^{d}$ and the penalization terms assure that the supremum is attained at a point $(\hat{t}, \hat{x}, \hat{y}) = (t_{\eta, \varrho}, x_{\eta, \varrho}, y_{\eta, \varrho})$. By classical arguments we prove that if $\varrho$ is sufficiently small 
\begin{equation*} 
|\hat{x}|^{2} + |\hat{y}|^{2} \leq \frac{n(T+1)}{\varrho} \quad \mbox{and} \quad |\hat{x}-\hat{y}|^{2} \leq \frac{2n(T+1)}{\eta}. 
\end{equation*}  
Moreover for $\eta$ large enough, the time $\hat{t}$ satisfies $0 < \hat{t} < T$ (see \cite{popi:06} for the details). 

Now since we avoid the terminal time, for $\eta$ large enough, we can apply Jensen-Ishii's Lemma for non local operator established by Barles and Imbert (Lemma 1 and Corollary 2 in \cite{barl:imbe:08}) with $u_{n,\eps}$ subsolution, $v_{*}$ supersolution and $\phi(x,y) = \frac{\eta}{2} |x-y|^{2} + \chi \left( |x|^{2} + |y|^{2} \right)$ at the point $(\hat{t}, \hat{x}, \hat{y})$. We deduce that for any $\delta > 0$ there exists $\bar \zeta > 0$, $(a,p,X)$, $(b,q,Y)$ such that for any $0 < \zeta < \bar \zeta$
\begin{eqnarray*}
\frac{\eps}{T} + o(\zeta)& \leq & -F(\hat{t},\hat{x},u_{n,\eps}(\hat t,\hat{x}), p, X, I^{n,\eps},B^{n,\eps})  + F(\hat{t},\hat{y},v_{*}(\hat t,\hat{y}),q, Y,I^*,B^*). 
\end{eqnarray*}
where $F$ is the function defined by \eqref{eq:IPDE_notation} and the non local operators are
\begin{eqnarray*}
I^{n,\eps}& = &  \mathcal{I}^{1,\delta}(\hat{t},\hat{x},\phi(.,\hat{y})) +  \mathcal{I}^{2,\delta}(\hat{t},\hat{x},p,u_{n,\eps}(\hat{t},\hat{x}))\\
I^* & = & \mathcal{I}^{1,\delta}(\hat{t},\hat{y},-\phi(\hat{x},.)) +  \mathcal{I}^{2,\delta}(\hat{t},\hat{y},q,v_*(\hat{t},\hat{y})) \\
B^{n,\eps} & = & \mathcal{B}^\delta (\hat{t},\hat{x},\phi(.,\hat{y}),u_{n,\eps}(\hat{t},\hat{x})) \\
B^* & = & \mathcal{B}^\delta (\hat{t},\hat{y},-\phi(\hat{x},.)),v_*(\hat{t},\hat{y})) .
\end{eqnarray*}
Using Conditions {\bf (A)} and {\bf (C)} (in particular {\bf C3}--{\bf C5}--{\bf C6}--{\bf C12}--{\bf C13}) and arguing as in \cite{popi:06}, Proposition 4.1 in \cite{hama:zhao:14} or Theorem 3 in \cite{barl:imbe:08}, one can control the difference of the right-hand side to obtain:
\begin{equation} \label{proofminsol}
\frac{\eps}{T}  + o(\zeta)\leq \omega_1\left( \eta,\varrho,\hat x,\hat y\right) O(\delta) + \omega_2 \left( \eta  |\hat{x}- \hat{y}|^{2}, \varrho (1 + |\hat{x}|^{2}+ |\hat{y}|^{2}), |\hat x-\hat y| \right) 
\end{equation} 
where we have gathered in the $\omega_1$ terms, all terms multiplied by $O(\delta)$. The $\omega_2$ term contains all terms of the form $\eta  |\hat{x}- \hat{y}|^{2}$, $\varrho (1 + |\hat{x}|^{2}+ |\hat{y}|^{2}) $ or $|\hat x-\hat y| $. The details are left to the reader. 

We let $\zeta$ and $\delta$ go to zero and since 
$$\lim_{\eta \to + \infty} \lim_{\varrho \to 0} \left( \frac{\eta}{2} |\hat{x}-\hat{y}|^{2} + \varrho \left( |\hat{x}|^{2} + |\hat{y}|^{2} \right) \right) = 0,$$
the inequality (\ref{proofminsol}) leads to a contradiction taking $\varrho$ sufficiently small and $\eta$ sufficiently large. Hence $u_{n,\eps} \leq v_{*}$ and it is true for every $\eps > 0$, so the result is proved. 
\end{proof}

\subsection{Singular terminal condition}

Now we want to study the behaviour of $u$ at the terminal time $T$. As for the singular BSDE \eqref{eq:gene_BSDE}, the main difficulty is to show that   
$$\limsup_{(t,x) \to (T,x_{0})} u(t,x) \leq g(x_{0}) = u(T,x_{0}).$$
On the set $\cS = \{ g = +\infty\}$, we already have \eqref{eq:term_cond_liminf}. Hence we concentrate ourselves on $\cR=\left\{  g < + \infty \right\}$. We overcome this problem in two steps:
\begin{itemize}
\item We prove that $u^{*}$ is locally bounded on a neighbourhood of $T$ on the open set $\cR$. \item We deduce that $u^{*}$ is a subsolution with relaxed terminal condition and we apply this to demonstrate that $u^{*}(T,x) \leq g(x)$ if $x \in \cR$.
\end{itemize}
To obtain the local boundedness of $u^*$, we add some conditions. We need to control a term due to the covariance between the jumps of the SDE \eqref{eq:SDE} and the jumps of the BSDE \eqref{eq:gene_BSDE}. We make a link between the singularity set $\cS$ and the jumps of the forward process $X$. More precisely we assume \textbf{Conditions (D)}:
\begin{description}
\item[D1.] The boundary $\bord$ is compact and of class $C^2$. 
\item[D2.] For any $x\in\mathcal{S}$, $\lambda$-a.s. 
$$x+\beta(x,e)\in\mathcal{S}.$$
Furthermore there exists a constant $\nu > 0$ such that if $x\in\bord$, then $d(x+\beta(x,e),\bord) \geq \nu$, $\lambda$-a.s. 
\end{description}
These assumptions mean in particular that if $X_{s^-} \in \mathcal{S}$, then $X_s \in \mathcal{S}$ a.s. Moreover if $X_{s^-}$ belongs to the boundary of $\mathcal{S}$, and if there is a jump at time $s$, then $X_s$ is in the interior of $\mathcal{S}$. We prove the next result
\begin{thm} \label{thm:sing_term_visc_sol}
Under conditions {\bf (A)-(B)-(C)-(D)} and if
\begin{equation}\label{eq:cond_def_rho} 
\frac{2}{q} + 2\left( 1 -\frac{1}{\ell} \right) < 1
\end{equation}
holds, then 
$$\lim_{(t,x)\to (T,x_0)} u(t,x) = g(x_0).$$
\end{thm}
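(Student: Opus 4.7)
The plan is to establish the upper bound only, since \eqref{eq:term_cond_liminf} already gives $\liminf_{(t,x)\to(T,x_0)} u(t,x)\geq g(x_0)$ and hence, on $\cS$, the limit $+\infty=g(x_0)$ follows immediately. What remains is to prove
\[
u^*(T,x_0):=\limsup_{(t,x)\to(T,x_0)} u(t,x)\leq g(x_0)\quad\text{for every } x_0\in\cR,
\]
which I split into the two steps indicated just before the statement.

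First I would prove local boundedness of $u^*$ near $T$ on $\cR$. Fix $x_0\in\cR$. Using B3 and D1, select an open ball $V$ around $x_0$ with $\overline V\subset\cR$ and $d(V,\bord)>0$, and a cut-off $\phi\in C^\infty_c(\cR)$ with $\phi\equiv 1$ on $V$. Following the Brownian argument of Proposition 23 in \cite{popi:06} and its jump-case analogue in \cite{popi:16}, apply It\^o's formula to
\[
(T-s)^{\rho}\,\phi(X^{t,x}_s)^p\,|Y^{n,t,x}_s|^{q+1}
\]
with $\rho,p>0$ chosen appropriately. Three ingredients cooperate: C8 produces a coercive term $c_q\int_t^T(T-s)^\rho \phi^p(X_s)\,a_s\,|Y^n_s|^{2q+1}\,ds\ge 0$; convexity of $y\mapsto|y|^{q+1}$ makes the jump compensator non-negative; and the error terms supported on $\supp\nabla\phi$ and on those jumps that exit $\supp\phi$ are controlled using D2, which forces any jump leaving $\bord$ to land at distance $\geq\nu$ inside $\cS$, so that the a priori estimate \eqref{eq:a_priori_estimate} is enough to dominate them. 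Finiteness of the remaining expectation follows from H\"older's inequality with exponents $\ell,\tilde\ell$ (using C10) together with Lemma \ref{lem:a_priori_estimate_cond}, provided $\rho$ can be chosen so that
\[
\int_t^T (T-s)^{\rho-(q+1)/q-2(1-1/\ell)}\,ds<\infty;
\]
condition \eqref{eq:cond_def_rho} is exactly what allows such a $\rho$. Letting $n\to\infty$ and using monotone convergence yields a uniform bound on $u$, hence on $u^*$, on $[T-h,T]\times V$ for some $h>0$.

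With $u^*$ locally bounded near $T$ on $\cR$, Step~2 consists in showing $u^*(T,x_0)\leq g(x_0)$ by the classical relaxed-terminal-condition argument. Since $u^*$ is usc and a viscosity subsolution of \eqref{eq:IPDE} on $[0,T)\times\R^d$ in the sense of Definition \ref{def:local_visc_sol}, a test function of the form $\psi(t,x)=(g(x_0)+2\varepsilon)+K(T-t)^{-\alpha}|x-x_0|^2+C(T-t)$ (with $\alpha,K,C$ tuned to the polynomial growth from Step~1 and to the non-local bounds in C10 and C13) forces a contradiction at a local maximizer of $u^*-\psi$ whenever $u^*(T,x_0)>g(x_0)+3\varepsilon$; this is the Cauchy-problem part of Section~4 of \cite{barl:buck:pard:97}, adapted to integro-differential equations as in \cite{barl:imbe:08}, with D2 again used to handle the $\cI^{2,\delta}$ and $\cB^\delta$ pieces near $\bord$. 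Letting $\varepsilon\to 0$ and combining with \eqref{eq:term_cond_liminf} concludes.

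The decisive obstacle is Step~1: the arithmetic balance between the singularity rate of $Y^n$ given by \eqref{eq:a_priori_estimate}, the duality exponent $\ell<2$ forced by C10, and the powers generated by It\^o's formula on $|Y^n|^{q+1}$ must all be compatible. Condition \eqref{eq:cond_def_rho} is precisely this balance and appears sharp for the method. Likewise D2 cannot be dropped: without it, jumps starting from $V\subset\cR$ could cross $\bord$ and encounter the blow-up of $Y^n$ on $\cS$, producing a boundary contribution that the coercive term $-a\,y^{q+1}$ is not strong enough to compensate.
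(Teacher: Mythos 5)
Your two-step outline (local boundedness of $u^*$ near $T$ on $\cR$, then a relaxed-terminal-condition/barrier argument) is exactly the paper's strategy, and your Step~2 is essentially the paper's Lemma \ref{lem:relaxed_term_cond} combined with Theorem 4.7 of \cite{barl:94}. The gap is in the concrete mechanism of Step~1. Applying It\^o's formula to $(T-s)^{\rho}\phi(X_s)^p|Y^{n}_s|^{q+1}$ cannot deliver a bound that is uniform in $t$ near $T$: if $\rho>0$, the quantity you control at time $t$ is $(T-t)^{\rho}\phi^p(x)|u_n(t,x)|^{q+1}$, so at best you obtain a blow-up rate $u_n\lesssim (T-t)^{-\rho/(q+1)}$; moreover the time-derivative term $\rho(T-s)^{\rho-1}\phi^p|Y^n|^{q+1}$ sits on the wrong side of the inequality and, by \eqref{eq:a_priori_estimate}, behaves like $(T-s)^{\rho-1-(q+1)/q}$, forcing $\rho>1+1/q$ and hence $\rho/(q+1)>1/q$ --- no improvement over \eqref{eq:majorationu}. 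If instead $\rho=0$, the terminal term is $\E[\phi^p(X_T)g(X_T)^{q+1}]$, which is not controlled by {\bf B2} (only $g(X_T)\ind_{\R^d\setminus\cS}(X_T)\in L^1$ is assumed, not a $(q+1)$-moment). Your arithmetic also produces only a lower bound on $\rho$, so it cannot make \eqref{eq:cond_def_rho} appear as the binding constraint, contrary to your claim that it does.

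The paper avoids both problems by applying It\^o to the \emph{first} power $Y^{n,t,x}\phi(X^{t,x})$ with $\phi=\psi^\gamma$, $\gamma>2(q+1)/q$: the terminal term is then $\E[g(X_T)\phi(X_T)]<\infty$ directly by {\bf B2}; the term $\E\int Y^n\ope\phi$ is dominated, via the Marcus--V\'eron inequality $\ope(\psi^\gamma)\le C\psi^{\gamma-2}$ and H\"older, by the coercive term $\E\int a\phi(Y^n)^{q+1}$ raised to the power $1/(q+1)<1$, so it is absorbed; the non-local term $\cI$ is handled the same way using {\bf (D)} through \eqref{eq:control_jumps_D}. Condition \eqref{eq:cond_def_rho} enters not through the power of $Y^n$ but through the weighted estimate \eqref{eq:sharp_estimate_Z_U} on $(Z^n,U^n)$, which requires $\rho<1$ and is what controls the cross terms $\nabla\phi\,\sigma Z^n$ and $(\phi(X_s)-\phi(X_{s^-}))U^n_s$ uniformly in $n$. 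To repair your argument you would need to switch to the first-power functional (or otherwise explain how a uniform-in-$t$ bound, rather than a blow-up rate, can come out of a weighted $(q+1)$-power identity).
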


\begin{rem}[On the condition \eqref{eq:cond_def_rho}] \label{rem:C+_rho}
The condition \eqref{eq:cond_def_rho} is a balance between the nonlinearity $q$ and the singularity of the generator $f$. It holds for $\ell < 2$ and $q > \frac{2\ell}{2-\ell}$. In other words if $q > 2$, we can take $\ell \in (1,2)$ such that $q > 2\ell/(2-\ell)$. The counterpart is that $\vartheta$ should be in $\bL^2_\lambda \cap \bL^{\tilde \ell}_\lambda$ with $\tilde \ell = \ell/(\ell-1)$. If the generator is $f(y) = - y|y|^q$, it is sufficient to suppose that $q > 2$, which was supposed in \cite{pioz:15} and in \cite{popi:06}.
\end{rem}

We split the proof in two lemmas. 
\begin{lem} \label{lem:bound_on_un}
Under the conditions of Theorem \ref{thm:sing_term_visc_sol}, there exists a constant $C$ independent of $n$ and $t$ such that for $\phi$ defined by \eqref{eq:def_test_func}
$$u_{n}(t,x) \phi(x) \leq C(1+|x|^\delta).$$
\end{lem}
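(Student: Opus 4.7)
The plan is to apply It\^o's formula to the product $Y^{n,t,x}_s\,\phi(X^{t,x}_s)$ on $[t,T]$, take expectations, and rearrange so that the dissipative term produced by \textbf{C8} dominates the error terms coming from the derivatives of $\phi$. Schematically, this yields
\begin{align*}
Y^n_t\phi(x)+\E\!\int_t^T a_s\phi(X_s)(Y^n_s)^{q+1}\,ds \le{}& \E\!\left[g_n(X_T^{t,x})\phi(X_T^{t,x})\right]+\E\!\int_t^T\phi(X_s)f^{0}_n(s,X_s)\,ds\\
&-\E\!\int_t^T Y^n_s(\ope\phi+\mathcal{I}\phi)(X_s)\,ds-\E\!\int_t^T Z^n_s\!\cdot\!(\nabla\phi\,\sigma)(X_s)\,ds\\
&-\E\!\int_t^T\!\!\int_E U^n_s(e)\bigl[\phi(X_{s^-}+\beta(X_{s^-},e))-\phi(X_{s^-})\bigr]\lambda(de)\,ds.
\end{align*}
Because $\phi$ vanishes on $\cS$ by the construction in \eqref{eq:def_test_func} and $g_n\le g$, the terminal contribution is dominated by $\E\bigl[g(X^{t,x}_T)\ind_\cR(X^{t,x}_T)\phi(X^{t,x}_T)\bigr]$, which by \textbf{B2}, \eqref{eq:integ_X} and the polynomial growth of $\phi$ is bounded by $C(1+|x|^\delta)$ uniformly in $n$; the term in $\phi f_n^0$ is controlled analogously via \textbf{C9}.

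To eliminate the stochastic integrands $Z^n$ and $U^n$, I would linearise $f_n$ in its last two arguments: by \textbf{C5} and Lemma \ref{lem:comp_gene} one has
\[
f_n(s,X,Y,Z,U)-f_n(s,X,Y,0,0) = \alpha_s\,Z+\int_E\kappa_s(e)U(e)\la(de)
\]
with $|\alpha_s|\le L$ and $0\le\kappa_s(e)\le L\vartheta(e)$. A Girsanov change of measure $\Prb\rightsquigarrow\widetilde{\Prb}$ generated by $(\alpha,\kappa)$---admissible because $\vartheta\in\bL^2_\la\cap\bL^{\tilde\ell}_\la$ by \textbf{C10}---turns $\int Z^n\,dW$ and $\int U^n\,d\tmu$ into $\widetilde{\Prb}$-martingales, while perturbing the coefficients of $X$ only by the bounded additional drift $\sigma\alpha+\int\beta\kappa\,\la(de)$, so that $\ope$ and $\mathcal{I}$ are modified only by zero-order operators. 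What remains on the right-hand side is then, up to controlled modifications, $\widetilde{\E}\!\int_t^T Y^n_s|(\ope\phi+\mathcal{I}\phi)(X_s)|\,ds$, which I split via Young's inequality with conjugate exponents $q+1$ and $(q+1)/q$:
\[
Y^n_s\,|(\ope\phi+\mathcal{I}\phi)(X_s)|\ \le\ \tfrac12\, a_s\phi(X_s)(Y^n_s)^{q+1}+C\,a_s^{-1/q}\phi(X_s)^{-1/q}|(\ope\phi+\mathcal{I}\phi)(X_s)|^{(q+1)/q},
\]
the first piece being absorbed on the left-hand side of the a priori estimate.

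I expect the main obstacle to be verifying that the test function $\phi$ prescribed by \eqref{eq:def_test_func} is tuned so that
\[
a^{-1/q}\,\phi^{-1/q}\Bigl(|\ope\phi+\mathcal{I}\phi|^{(q+1)/q}+|(\nabla\phi)\sigma|^{(q+1)/q}\Bigr)\in\Pi_{pg}(0,T),
\]
uniformly on a neighbourhood of $T$. This forces $\phi$ to vanish on $\cS$ at a polynomial rate high enough for the non-local operator $\mathcal{I}\phi$---which is genuinely global in $e$ and involves the full quantity $\phi(x+\beta(x,e))$---to remain pointwise controlled near $\bord$, yet not so high that the inverse factor $\phi^{-1/q}$ creates a singularity at $\bord$. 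This delicate balance is precisely the arithmetic content of \eqref{eq:cond_def_rho}, and the analysis of $\mathcal{I}\phi$ further depends crucially on \textbf{D1} (which provides a $C^2$ tubular neighbourhood of $\bord$ on which a Taylor expansion of $\phi$ is available) and on \textbf{D2} (which guarantees $\phi(x+\beta(x,e))=0$ for $x\in\cS$ and that jumps originating from $\bord$ land at distance at least $\nu$ inside $\cS$, so that only jumps starting from a thin tubular neighbourhood of $\bord$ have to be estimated carefully).
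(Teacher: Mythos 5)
Your skeleton (It\^o on $Y^{n,t,x}_s\phi(X^{t,x}_s)$, the dissipative term from \textbf{C8}, a test function vanishing on $\cS$, conditions \textbf{(D)} to tame $\mathcal{I}\phi$, and H\"older/Young with exponents $q+1$ and $(q+1)/q$) matches the paper's proof. But there is a genuine gap in how you dispose of $Z^n$ and $U^n$. Your Girsanov change of measure only linearises the generator's dependence on $(z,u)$; it does nothing to the two quadratic-covariation terms that It\^o's formula produces for the \emph{product}, namely $\E\int_t^T (\nabla\phi\,\sigma)(X_s)\,Z^n_s\,ds$ and $\E\int_t^T\!\int_E U^n_s(e)\bigl[\phi(X_{s^-}+\beta(X_{s^-},e))-\phi(X_{s^-})\bigr]\lambda(de)\,ds$, which you correctly list in your first display and then silently drop when you assert that ``what remains is $\widetilde{\E}\int Y^n|\ope\phi+\mathcal{I}\phi|$''. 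These terms are not martingale increments and cannot be absorbed by a kernel built from the linearisation $(\alpha,\kappa)$ of $f_n$ alone; absorbing the jump covariation would require a multiplicative kernel involving $\phi(X_{s^-}+\beta)-\phi(X_{s^-})$, whose admissibility (nonnegativity of the density, true-martingale property) you have not addressed. The paper instead controls exactly these two terms by the weighted a priori estimate $\E\bigl[\int_t^T(T-s)^{\rho}(|Z^n_s|^2+\|U^n_s\|^2_{\bL^2_\lambda})\,ds\bigr]^{\ell/2}\leq C(1+|x|^\delta)$ from \cite{popi:16}, combined with $\int_t^T(T-s)^{-\rho}\,ds<\infty$.

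This also explains your second error: the condition \eqref{eq:cond_def_rho} is \emph{not} the ``arithmetic content'' of tuning the vanishing rate of $\phi$ so that $a^{-1/q}\phi^{-1/q}|\ope\phi+\mathcal{I}\phi|^{(q+1)/q}$ stays bounded. That tuning is governed purely by the exponent $\gamma>2(q+1)/q$ in $\phi=\psi^\gamma$ (so that $\ope(\psi^\gamma)\leq C\psi^{\gamma-2}$ with $(\gamma-2)\tfrac{q+1}{q}\geq\tfrac{\gamma}{q}$, the Marcus--V\'eron device), and it involves neither $\ell$ nor $\vartheta$. Condition \eqref{eq:cond_def_rho} is what makes the exponent $\rho=\tfrac2q+2(1-\tfrac1\ell)+\tfrac{2\eta}{\ell}$ strictly less than $1$, i.e.\ it is needed precisely for the $Z^n$- and $U^n$-terms your argument leaves uncontrolled. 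Had your Girsanov step actually eliminated those terms, \eqref{eq:cond_def_rho} would be superfluous --- a sign that the step cannot work as stated. To repair the proof you should replace the change of measure by the weighted estimate on $(Z^n,U^n)$ (or prove an equivalent uniform-in-$n$ bound) and keep the rest of your argument.
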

\begin{proof}
To prove the local boundedness of $u^*$, we follow the same scheme as in \cite{popi:16}, Section 4.4. Remember that $\cS$ is the singular set of $g$, $\cR = \cS^c$ is open and for any $\eps > 0$ we define
$$\Gamma(\eps):=\{x\in\cR: \ d(x,\bord) \geq \eps \}.$$
$d(.,\bord)$ is the distance to the boundary $\bord$. By the $C^\infty$ Urysohn lemma, there exists a $C^\infty$ function $\psi$ such that $\psi \in [0,1]$, $\psi\equiv 1$ on $\Gamma(\eps)$ and $\psi\equiv 0$ on $\Gamma(\eps/2)^c$. In particular the support of $\psi$ is included in $\cR$ and since $\bord$ is compact, $\psi$ belongs to $C^\infty_b(\R^d)$. We take $\gamma > 2(q+1)/q$ and we define 
\begin{equation}\label{eq:def_test_func}
\phi = \psi^\gamma.
\end{equation} 
Note that $\phi$ also takes its values in $[0,1]$, $\phi\equiv 1$ on $\Gamma(\eps)$ and $\phi\equiv 0$ on $\Gamma(\eps/2)^c$.

Using Lemma \ref{lem:a_priori_estimate_cond}, since \eqref{eq:cond_def_rho} holds, we can take $\eta > 0$ such that 
\begin{equation}\label{eq:def_rho} 
\rho = \frac{2}{q} + 2\left( 1 -\frac{1}{\ell} \right) + \frac{2\eta}{\ell} < 1 .
\end{equation}
From Proposition 2 in \cite{popi:16}, there exist two constants $C$ and $\delta$ independent of $(n,t,x)$ such that the process $(Z^{n,t,x},U^{n,t,x})$ satisfies:
\begin{equation} \label{eq:sharp_estimate_Z_U}
\E\left[ \int_t^T (T-s)^{\rho} \left( |Z^{n,t,x}_s|^2 + \|U^{n,t,x}_s\|^2_{\bL^2_\lambda} \right) ds  \right]^{\ell/2} \leq C(1+|x|^\delta).
\end{equation}

We use It\^o's formula to the process $Y^{n,t,x}\phi(X^{t,x})$ where $\phi$ is defined by \eqref{eq:def_test_func}, between $t$ and $T$ and we take the expectation since $(Y^n,Z^n,U^n,M^n)$ belongs to $\bS^2(0,T)$, $X$ is in $\bH^2(0,T)$, and $\phi$ and the derivatives of $\phi$ are supposed to be bounded. Thus we obtain for $x \in \R^d$ and $t\in [0,T)$:
\begin{eqnarray}  \label{eq:behaviour_visc_sol_T}
&& u_n(t,x)\phi(x) = \E[Y^{n,t,x}_T\phi(X^{t,x}_T)]  - \E\int_t^T Y^{n,t,x}_{s_-} \left[ \ope \phi(s,X^{t,x}_s)  + \mathcal{I}(s,X^{t,x}_{s^-}, \phi) \right] ds\\ \nonumber
&& \qquad + \E\left[\int_t^T  \phi(X^{t,x}_{s_-}) f_n(s,Y^{n,t,x}_s,Z^{n,t,x}_s,U^{n,t,x}_s) ds\right]\\ \nonumber
&& \qquad - \E\left[\int_t^T \nabla\phi(X^{t,x}_s)\sigma(X^{t,x}_s)Z^{n,t,x}_s ds\right]\\ \nonumber
&&\qquad  - \E\left[\int_t^T \int_E (\phi(X^{t,x}_s)-\phi(X^{t,x}_{s_-}))U^{n,t,x}_s(e)\lambda(de)ds\right].
\end{eqnarray}
From the Assumptions \textbf{B1} and \textbf{B2} on $\xi=g(X^{t,x}_T)$, we have for any $n$:
$$\E (Y^{n,t,x}_T \phi(X^{t,x}_T)) \leq \E (g(X^{t,x}_T)\phi(X^{t,x}_T)) < +\infty.$$
Now we decompose the quantity with the generator $f_n$ as follows:
\begin{eqnarray} \label{eq:decomp_generator}
&&\E\left[\int_t^T \phi(X^{t,x}_{s_-}) f_n(s,X^{t,x}_s,Y^{n,t,x}_s,Z^{n,t,x}_s,U^{n,t,x}_s) ds\right] \\ \nonumber
&& \quad = \E\left[\int_t^T \phi(X^{t,x}_{s_-}) (f(s,X^{t,x}_s,Y^{n,t,x}_s,0,0)-f^{0,t,x}_s) ds\right] \\ \nonumber
&& \qquad +\E\left[\int_t^T \phi(X^{t,x}_{s_-}) (f^{0,t,x}_s\wedge n) ds\right] \\ \nonumber
&&\qquad + \E\left[\int_t^T \phi(X^{t,x}_{s_-}) \zeta^n_s Z^{n,t,x}_s ds\right] + \E\left[\int_t^T \phi(X^{t,x}_{s_-}) \mathcal{U}^{n,t,x}_s ds\right]
\end{eqnarray}
where $\zeta^n_s$ is a $k$-dimensional random vector defined by:
$$\zeta^{i,n}_s = \frac{\left( f(s,X^{t,x}_s,Y^{n,t,x}_s,Z^{n,t,x}_s,0) - f(s,X^{t,x}_s,Y^{n,t,x}_s,0,0)\right) }{Z^{i,n}_s} \ind_{Z^{i,n,t,x}_s\neq 0}$$
and
$$\mathcal{U}^n_s  = f(s,X^{t,x}_s,Y^{n,t,x}_s,Z^{n,t,x}_s,U^{n,t,x}_s) - f(s,X^{t,x}_s,Y^{n,t,x}_s,Z^{n,t,x}_s,0).$$

Now from Conditions {\bf C5} and {\bf C9}, using Property \eqref{eq:integ_X},  Lemma \ref{lem:comp_gene} and Estimate \eqref{eq:sharp_estimate_Z_U}, with $\rho < 1$ given by \eqref{eq:def_rho}, we can prove that there exists a constant $C$ such that for any $n$
\begin{eqnarray} \label{eq:control_cont_1}
&& \E \int_t^T |\left( \nabla\phi(X^{t,x}_s)\sigma(X^{t,x}_s) + \phi(X^{t,x}_{s}) \zeta^n_s\right)Z^{n,t,x}_s| ds \\ \nonumber
&& \quad + \E \int_t^T \left( \int_E |\phi(X^{t,x}_{s})-\phi(X^{t,x}_{s_-})| |U^{n,t,x}_s(e)| \lambda(de) +  | \phi(X^{t,x}_{s_-})| |\mathcal{U}^n_s| \right) ds \\ \nonumber
&& \quad + \E\left[\int_t^T \phi(X^{t,x}_{s_-}) (f^{0,t,x}_s\wedge n) ds\right] \leq C(1+|x|^\delta)
\end{eqnarray}
Moreover since $\gamma > 2(q+1)/q$ in \eqref{eq:def_test_func}, there is a constant $C$ such that 
$$\ope \phi =\ope (\psi^\gamma) \leq C \psi^{\gamma-2},$$
(see \cite{marc:vero:99}). Hence by  H\"older's inequality
\begin{eqnarray} \label{eq:control_cont_3}
&&  \E \left[ \int_t^T |Y^{n,t,x}_{s_-} \ope \phi (s,X^{t,x}_s)| ds \right] \leq C \left[ \E \int_t^T a(s,X^{t,x}_s) \phi(X^{t,x}_s) (Y^{n,t,x}_{s})^{q+1}  ds \right]^{1/(q+1)}.
\end{eqnarray}

Up to now Conditions {\bf (D)} were not used. In fact they are assumed only to control the non local term $\cI$. Since $\bord$ is compact and of class $C^1$, then there exists a constant $\eps_0  >0$ such that for every $y\in \cR \cap \Gamma(\eps_0)^c$, there exists a unique $z\in\bord$ such that $d(y,\bord)=\|y-z\|$ (see for example \cite{gilb:trud:01}, Section 14.6). From \cite{popi:16}, Lemma 4.8, we can choose $\eps_0$ small enough such that for any $0 < \eps < \eps_0$:
\begin{equation}\label{eq:control_jumps_D}
\psi(X_{s^-})=0\Rightarrow\psi(X_s)=0,\qquad \frac{\psi(X_s)}{\psi(X_{s^-})} = \psi(X_s)  \ind_{\Gamma(\eps)}(X_{s^-}).
\end{equation}
These last properties are used to prove as in \cite{popi:16} that:
\begin{equation}\label{eq:control_cont_4}
\E \left[\int_t^T Y^{n,t,x}_{s_-}|\mathcal{I}(s,X^{t,x}_{s^-},\phi)|ds\right]  \leq C \left[ \E \int_t^T a(s,X^{t,x}_s) \phi(X^{t,x}_{s^-})(Y^{n,t,x}_s)^{q+1}ds\right]^{\frac{1}{q+1}} .
\end{equation}

Now we come to the conclusion. By Condition {\bf C8}
\begin{eqnarray}\label{eq:control_gene_power}
&&- \E\left[\int_t^T \phi(X^{t,x}_{s_-}) (f(s,X^{t,x}_s,Y^{n,t,x}_s,0,0) - f^{0,t,x}_s )ds\right]\\ \nonumber
&&\qquad  \geq \E\left[\int_0^t \phi(X^{t,x}_{s_-}) a(s,X^{t,x}_s) (Y^{n,t,x}_s)^{1+q} ds\right] .
\end{eqnarray}
The relations \eqref{eq:control_cont_1}, \eqref{eq:control_cont_3}, \eqref{eq:control_cont_4} and \eqref{eq:control_gene_power} hold.
Thus, we have:
\begin{eqnarray*}
&&-\E \int_{t}^{T} \phi(X^{t,x}_{s}) f_n(s,Y^{n,t,x}_s,Z^{n,t,x}_s,U^{n,t,x}_s) ds \\
&&\qquad + \E\int_{t}^{T} Y^{t,x,n}_{s^-}\left[ \ope \phi(X^{t,x}_{s})  +  \mathcal{I}(s,X^{t,x}_{s^-}, \phi)\right] ds  \leq C(1+|x|^\delta).  
\end{eqnarray*}
The constant $C$ does not depend on $n$ and $t$. In the left hand side, the second term is controlled by the first one raised to a power strictly smaller than 1 (see \eqref{eq:control_cont_3} and \eqref{eq:control_cont_4}). Therefore, there exists a constant $C$:
$$\E \int_{t}^{T}  \phi(X^{t,x}_{s})  |f_n(s,Y^{n,t,x}_s,Z^{n,t,x}_s,U^{n,t,x}_s)|  ds \leq C(1+|x|^\delta).$$
From \eqref{eq:behaviour_visc_sol_T} we deduce that there exists a constant $C$ independent of $n$ and $t$ such that 
$$u_{n}(t,x) \phi(x) \leq C(1+|x|^\delta).$$
This achieves the proof of the lemma.
\end{proof}

\vspace{0.3cm}
From the boundedness of $u^n$ on $[0,T] \times \Gamma(\eps)$, uniformly in $n$, we can derive the next result.
\begin{lem} \label{lem:relaxed_term_cond}
Assumptions {\bf (A)-(B)-(C)-(D)} and \eqref{eq:cond_def_rho} hold. For any $\eps > 0$, if we define the closed subset of $\cR$
$$\Gamma(\eps):=\{x\in\cR: \ d(x,\bord) \geq \eps \}$$
$u^*$ is a subsolution with relaxed terminal condition:
\begin{equation*} 
\left\{ \begin{array}{l}
\displaystyle - \frac{\partial u^{*}}{\partial t} - \ope u^{*} -\mathcal{I} u^* - f(t,x,u^*,\nabla u^* \sigma,\mathcal{B}(t,x,u^*)) = 0, \ \mbox{in} \  [0,T) \times \Gamma(\eps);\\  
\\
\displaystyle \min \left[ - \frac{\partial u^{*}}{\partial t} - \ope u^{*}  -\mathcal{I} u^*- f(t,x,u^*,\nabla u^* \sigma,\mathcal{B}(t,x,u^*)) ; \ u^{*} - g \right] \leq 0, \ \mbox{in} \ \left\{ T \right\} \times\Gamma(\eps).
\end{array} \right.
\end{equation*}
\end{lem}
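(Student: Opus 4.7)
My approach: the first assertion on $[0,T)\times\Gamma(\eps)$ is a direct consequence of Theorem \ref{thm:visc_sol}, since $u^*$ is already known there to be a viscosity subsolution of \eqref{eq:IPDE} on $[0,T)\times\R^d$; the substantive part is the relaxed terminal condition at $\{T\}\times\Gamma(\eps)$. For this I would use the classical half-relaxed-limits technique, the essential new ingredient being the uniform local bound supplied by Lemma \ref{lem:bound_on_un}.

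Fix $x_0\in\Gamma(\eps)$ and a test function $\phi\in C^{1,2}$ such that $(T,x_0)$ is a strict global maximum of $u^*-\phi$ on a small closed neighborhood $\overline V\subset [0,T]\times\Gamma(\eps)$, and argue by contradiction, assuming both $u^*(T,x_0)>g(x_0)$ and that the subsolution inequality at $(T,x_0)$ is strictly positive. Since Lemma \ref{lem:bound_on_un} gives $u_n(t,x)\leq C(1+|x|^{\delta})$ on $\Gamma(\eps)$ uniformly in $(n,t)$, the sequence $(u_n)$ is bounded on $\overline V$, and a standard perturbation/compactness argument produces $(t_n,x_n)\in\overline V$ such that $(t_n,x_n)\to(T,x_0)$, $u_n(t_n,x_n)\to u^*(T,x_0)$, and $(t_n,x_n)$ is a local maximum of $u_n-\phi$ on $\overline V$. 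If $t_n=T$ along a subsequence, then $u_n(T,x_n)=g_n(x_n)\leq g(x_n)$, and continuity of $g$ (Assumption \textbf{B3}) would force $u^*(T,x_0)\leq g(x_0)$, contradicting the standing assumption; hence $t_n<T$ eventually. The viscosity subsolution property of $u_n$ at $(t_n,x_n)$, taken in the global form of Definition \ref{def:global_visc_sol} (equivalent to Definition \ref{def:local_visc_sol}), then yields
\begin{equation*}
-\partial_t\phi(t_n,x_n)-\ope\phi(t_n,x_n)-\mathcal{I}(t_n,x_n,\phi)-f_n\bigl(t_n,x_n,u_n(t_n,x_n),(\nabla\phi\,\sigma)(t_n,x_n),\mathcal{B}(t_n,x_n,\phi)\bigr)\leq 0.
\end{equation*}

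To finish I would pass to the limit $n\to\infty$: the maps $(t,x)\mapsto\mathcal{I}(t,x,\phi)$ and $(t,x)\mapsto\mathcal{B}(t,x,\phi)$ are continuous by dominated convergence, since $\phi\in C^2$ and $\gamma(\cdot,e)\leq\vartheta(e)\in\bL^2_\lambda$ (\textbf{C7}); $f_n\to f$ pointwise and monotonically (because $f_n$ differs from $f$ only through the cut-off $f^0\wedge n\uparrow f^0$) and $f$ is continuous in its arguments by \textbf{C11}--\textbf{C13}; combined with $u_n(t_n,x_n)\to u^*(T,x_0)$ this yields the subsolution inequality for $u^*$ at $(T,x_0)$, the sought contradiction. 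The main obstacle I foresee is ensuring that the maximizers $(t_n,x_n)$ remain in $\overline V\subset\Gamma(\eps)$ — the only region where Lemma \ref{lem:bound_on_un} controls $u_n$ uniformly — so that the extraction of a limit point really works; this is also the reason I would work with Definition \ref{def:global_visc_sol} rather than Definition \ref{def:local_visc_sol}, since the non-local integrals $\mathcal{I}^{2,\delta}$ and $\mathcal{B}^\delta$ in the latter would involve $u_n(t,x+\beta(x,e))$ at jump endpoints possibly escaping $\Gamma(\eps)$, where a uniform-in-$n$ bound is not available.
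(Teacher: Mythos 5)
Your skeleton coincides with the paper's: the interior statement is read off from Theorem \ref{thm:visc_sol}, and the relaxed terminal condition is obtained by Barles' half-relaxed-limit argument at a strict maximum point $(T,x_0)$ of $u^*-\phi$, using the uniform bound of Lemma \ref{lem:bound_on_un} to extract maximizers $(t_n,x_n)\to(T,x_0)$ with $u_n(t_n,x_n)\to u^*(T,x_0)$, ruling out $t_n=T$ via $u_n(T,\cdot)=g\wedge n$ and the continuity of $g$, and passing to the limit in the subsolution inequality satisfied by $u_n$. All of this matches the paper's proof (which phrases the dichotomy directly rather than as a contradiction, an immaterial difference).

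The one place where you depart from the paper is also where your argument has a genuine gap: the decision to invoke Definition \ref{def:global_visc_sol} instead of Definition \ref{def:local_visc_sol}. The global definition may only be applied at a \emph{global} maximum point of $u_n-\phi$ over $[0,T)\times\R^d$, whereas your compactness argument produces maximizers of $u_n-\phi$ only on the compact set $\overline{V}\subset[0,T]\times\Gamma(\eps)$, i.e.\ local maxima. These cannot be upgraded to global ones here: since $u_n(T,y)=g_n(y)=n$ for $y\in\cS$ and $\cS\neq\emptyset$, one has $\sup_{[0,T]\times\R^d}\,(u_n-\phi)\geq n-\phi(T,y)\to+\infty$ for any fixed test function of polynomial growth, so the global maximizers of $u_n-\phi$ escape towards $\{T\}\times\cS$ and do not converge to $(T,x_0)$. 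The equivalence between the two definitions is precisely the device that permits working with local maxima, at the price of re-introducing $u_n$ at the displaced points $x+\beta(x,e)$ in the far-field terms $\mathcal{I}^{2,\delta}$ and $\mathcal{B}^{\delta}$ --- the very terms you wished to avoid. The paper accepts this: it applies Definition \ref{def:local_visc_sol} on $[0,T]\times\overline{B(x,R_\delta)}$, keeps $u_{n_k}$ inside $\mathcal{I}^{2,\delta}$ and $\mathcal{B}^{\delta}$, and controls these terms in the passage to the limit (the region $|e|>\delta$ carries finite $\lambda$-mass and $u_{n_k}\leq u$ is controlled by \eqref{eq:majorationu}). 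Your write-up should be repaired along these lines; as it stands, the step ``the viscosity subsolution property of $u_n$ at $(t_n,x_n)$, taken in the global form'' is not justified.
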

\begin{proof}
For any $0 < \eps < \eps_0$, $u_{n}$ is bounded on $[0,T] \times \Gamma(\eps)$ by $C(1+|x|^\delta)$ uniformly w.r.t. to $n$. Therefore, $u^{*}$ is bounded on $[0,T] \times \Gamma(\eps)$ by $C(1+|x|^\delta)$. We know that  $u_{n}$ is a subsolution of the IPDE \eqref{eq:IPDE}  restricted to $[0,T] \times \Gamma(\eps)$, i.e. for $ (t,x) \in [0,T[ \times \Gamma(\eps)$
\begin{equation*} 
- \frac{\partial u_{n}}{\partial t}(t,x) - \ope u_{n}(t,x)- \mathcal{I}(t,x,u_n)  - f_n(t,x,u_n,(\nabla u_n)\sigma(t,x),\mathcal{B}(t,x,u_n)) = 0
\end{equation*}
with the terminal condition 
$$u_{n}(T,x) = (g \wedge n)(x), \ x \in \Gamma(\eps).$$
From Theorem \ref{thm:visc_sol}, $u^{*}$ is a subsolution of the IPDE \eqref{eq:IPDE} on $[0,T[\times \Gamma(\eps)$. 

The behaviour at time $T$ is an adaptation of Theorem 4.1 in \cite{barl:94} (see also section 4.4.5 in \cite{barl:94}). Since $g$ is continuous (Hypothesis {\bf B3}), 
$$g(x) = \overline{g}(x) = \limsup_{{n \rightarrow + \infty}\atop{x' \rightarrow x}} (g\wedge n)(x').$$ 
Now assume that for $\vartheta \in C^{1,2}([0,T]\times \R^d) \cap \Pi_{pg}$ such that $u^* -\vartheta$ has a strict global maximum on $[0,T]\times \Gamma(\eps)$ at $(T,x)$ and suppose that $u^*(T,x) > g(x)$. There exists a subsequence $n_k$ such that $(t_{n_k},x_{n_k})$ is the global maximum of $u_{n_k}-\vartheta$ on $[0,T]\times \overline{B(x,R_\delta)}$ and as $k$ goes to $\infty$, $(t_{n_k},x_{n_k})\longrightarrow (T,x)$ and $u_{n_k}(t_{n_k},x_{n_k}) \longrightarrow u^*(T,x)$. This implies in particular that $t_{n_k} < T$ for any $k$ large enough. If not, then up to a subsequence (still denoted $n_k$), 
$$u^*(t,x)=\limsup_{k} u_{n_k}(t_{n_k},x_{n_k}) = \limsup_{k} u_{n_k}(T,x_{n_k})  = \limsup_{k} (g\wedge n_k)(x_{n_k}) \leq g(x) .$$
Since $u_{n_k}$ is a subsolution, we still have 
by Definition \ref{def:local_visc_sol}, 
\begin{eqnarray*} 
&& -\frac{\partial }{\partial t} \vartheta(t_{n_k},x_{n_k}) - \ope \vartheta(t_{n_k},x_{n_k}) - \mathcal{I}^{1,\delta}(t_{n_k},x_{n_k},\vartheta) - \mathcal{I}^{2,\delta}(t_{n_k},x_{n_k},\nabla \vartheta,u_{n_k}) \\ \nonumber 
&&\qquad - f^{n_k}(t_{n_k},x_{n_k},u_{n_k},(\nabla \vartheta)\sigma(t_{n_k},x_{n_k}),\mathcal{B}^\delta(t_{n_k},x_{n_k},\vartheta,u_{n_k})) \leq 0.
\end{eqnarray*}
and passing through the limit we obtain 
\begin{eqnarray*} 
&& -\frac{\partial }{\partial t} \vartheta(T,x) - \ope \vartheta(T,x) - \mathcal{I}^{1,\delta}(T,x,\vartheta) \\
&& \qquad \leq  \mathcal{I}^{2,\delta}(T,x,\nabla \vartheta,\vartheta) + f(T,x,u^{*},(\nabla \vartheta)\sigma(T,x),\mathcal{B}(T,x,\vartheta)) .
\end{eqnarray*}
Thus $u^*$ is a subsolution on $[0,T]\times \Gamma(\eps)$. 
\end{proof}

\vspace{0.5cm}
From Lemma \ref{lem:relaxed_term_cond}, Theorem 4.7 in \cite{barl:94} (with straightforward modifications) shows that $u^{*} \leq g$ in $ \left\{ T \right\} \times \Gamma(\eps)$. In other words for any $x_0 \in \mathcal{R}$, 
$$\limsup_{(t,x)\to (T,x_0)} u(t,x) \leq g(x_0).$$
With Inequality \eqref{eq:term_cond_liminf}, we obtain the desired behaviour of $u$ near terminal time $T$. This achieves the proof of Theorem \ref{thm:sing_term_visc_sol}.

\section{Regularity of the minimal solution} \label{sect:regul}

The function $u$ is the minimal non negative viscosity solution of the PDE \eqref{eq:IPDE}. From \eqref{eq:majorationu} we know that $u$ is finite on $[0,T[ \times \R^{d}$, and for $\eps > 0$ $u$ is bounded on $[0,T-\eps] \times \R^{d}$ by $K(1+|x|^\delta)\eps^{-1/q}$. We cannot expect regularity on $[0,T]\times \R^d$, but only on $[0,T-\eps]\times \R^d$ for any $\eps > 0$. In order to obtain a smoother solution $u$, some assumptions are imposed on the coefficients. We distinguish three different conditions.  
\begin{itemize}
\item {\it Sobolev regularity}. The viscosity solution is a weak solution in the Sobolev sense if the coefficients on the forward SDE \eqref{eq:SDE} are smooth and if the {\it linkage operator} $x\mapsto x +\beta(x,e)$ is a $C^2$-diffeomorphism. 
\item {\it H\"older regularity}. Under some non degeneration assumption on the operators $\ope$ ({\bf A5}) or $\cI$ ({\bf (E)}), then the viscosity solution is locally H\"older continuous. 
\item {\it Strong regularity}. Under the uniform ellipticity condition {\bf A5}, $u$ can be a classical solution under different settings.
\begin{itemize}
\item If the measure $\lambda$ is finite we can transform the IPDE \eqref{eq:IPDE} into some PDE without non local operator (technique developed in \cite{ma:yong:zhao:10} or \cite{pham:98}) and then use regularity arguments for such PDE. 
\item In the setting of \cite{garr:mena:02}, i.e. for some $\gamma < 2$
\begin{equation} \label{eq:gamma_cond_lambda}
\int_E (1\wedge |e|^\gamma) \lambda(de) < +\infty
\end{equation}
and the linkage operator satisfies
\begin{equation} \label{eq:linkage_cond}
\det (\mbox{{\rm Id}}_d + \nabla_x \beta(x,e)) \geq c_1 > 0,
\end{equation}
the existence of a Green function $G$ with suitable properties will ensure a regularizing effect of the operator $\ope + \cI$. 
\end{itemize}
\end{itemize}
Of course, none of these settings gives necessary conditions and other sufficient assumptions could be exhibited.

\subsection{Sobolev regularity of the solution}

The solution $u$ is the increasing limit of $u_n$. For $u_n$ we can apply Theorem 1 of \cite{mato:sabb:zhou:15}. Indeed let us fix a continuous positive and integrable weight function $\rho$ such that $1/\rho$ is locally integrable. We define $\bL^2_\rho([0,T]\times\R^d)$ the Hilbert space of functions $v :[0,T]\times \R^d \to \R$ such that 
$$\int_0^T \int_{\R^d} |v(t,x)|^2 \rho(x) dx dt < +\infty.$$
We assume that 
\begin{itemize}
\item The functions $b$, $\sigma$, $\beta(.,e)$ are in $C^3_{l,b}(\R^d)$ for any $e \in E$. Condition {\bf A3} holds also for all derivatives of $\beta$ of order less than or equal to 3. 
\item For each $e \in E$ the linkage operator $x \mapsto x+ \beta(x,e)$ is a $C^2$-diffeomorphism. 
\end{itemize}
These extra assumptions are used to control the stochastic flow generated by $X^{t,x}$ (see Proposition 2 in \cite{mato:sabb:zhou:15}). 

Recall that we can replace in the BSDE \eqref{eq:trunc_BSDE} our generator $f_n$ by $\widehat f_n$ where $\widehat f_n$ is Lipschitz continuous w.r.t. $y$. Hence all assumptions of Theorem 1 in \cite{mato:sabb:zhou:15} are fulfilled: $u_n(t,x) = Y^{n,t,x}_t$ is the unique Sobolev solution of IPDE \eqref{eq:IPDE} in the space
$$\cH_T = \left\{ v \in L^2_\rho([0,T]\times\R^d) , \quad \sigma^* \nabla v  \in L^2_{\rho}([0,T] \times \R^d)\right\}.$$
The definition of Sobolev solution is given in Definition 1 in \cite{mato:sabb:zhou:15}. Moreover $(\sigma^* \nabla u_n) (t,x) = Z^{n,t,x}_t$. In particular for any $\eps > 0$, and each function $\phi \in C^\infty([0,T]\times \R^d)$ with compact support in $\R^d$, for any $t \leq T-\eps$
\begin{eqnarray*}
&& \int_t^{T-\eps} (u_n(s,.),\partial_s\phi(s,.))ds + (u_n(t,.),\phi(t,.))-(u_n(T-\eps,.),\phi(T-\eps,.)) \\
&& \qquad - \int_t^{T-\eps} (u_n(s,.),\cA^*\phi(s,.))ds \\
&&\quad = \int_t^{T-\eps} (f(s,.,u_n(s,.),\sigma^*\nabla u_n(s,.), \mathcal{B}(s,.,u_n)),\phi(s,.))ds 
\end{eqnarray*}
where $(v,w) = \int_{\R^d} u(x) v(x)dx$ is the scalar product on $L^2(\R^d)$ and $\cA^*$ is the adjoint operator of the operator $\ope+ \cI$. 

Moreover for any $\eps  >0$, on $[0,T-\eps]$, by the estimate \eqref{eq:majorationu}, and from the inequality \eqref{eq:sharp_estimate_Z_U} if \eqref{eq:cond_def_rho} holds, we deduce that $u_n$ and $\sigma^* \nabla u_n$ are bounded from above by $C(1+|x|^\delta)$ for some $C> 0$ and $\delta > 0$. Hence if we choose the suitable weight $\rho$, $u_n$ and $\sigma^* \nabla u_n$ are bounded in $\cH_{T-\eps}$. Therefore the next result is proved. 
\begin{prop} \label{prop:sobolev_reg}
Under conditions {\rm \textbf{(A)}-\textbf{(B)-\textbf{(C)}}} and \eqref{eq:cond_def_rho}, if the coefficients $b$, $\sigma$ and $\beta$ satisfy the above conditions, then $u \in \cH_{T-\eps}$ and is a Sobolev solution of the IPDE \eqref{eq:IPDE} on $[0,T-\eps]$ for any $\eps > 0$. 
\end{prop}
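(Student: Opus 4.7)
The plan is to realize $u$ on $[0,T-\eps]\times \R^d$ as the limit of the sequence of Sobolev solutions $u_n$ of the truncated IPDEs, and then to pass to the limit in their weak formulation.

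First I would fix $\eps > 0$ and apply Theorem 1 of \cite{mato:sabb:zhou:15} to each truncated BSDE \eqref{eq:trunc_BSDE}. The smoothness assumptions on $b$, $\sigma$, $\beta$ together with the fact that $\widehat f_n$ is Lipschitz in $y$ (because $Y^{n,t,x}$ is a priori bounded by $n(T+1)$) ensure that $u_n(t,x) = Y^{n,t,x}_t$ is the unique Sobolev solution in $\cH_T$ of the IPDE with generator $f_n$ and terminal condition $g_n = g\wedge n$, with the identification $(\sigma^*\nabla u_n)(t,x) = Z^{n,t,x}_t$ almost everywhere. In particular $u_n$ satisfies the weak integral identity displayed just above the proposition, for every test function $\phi \in C^\infty([0,T]\times \R^d)$ with compact support in $\R^d$.

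The second step is to establish uniform $n$-bounds on $[0,T-\eps]\times\R^d$. The estimate \eqref{eq:majorationu_2} gives $0 \leq u_n(t,x) \leq K\eps^{-1/q}(1+|x|^\delta)$ uniformly in $n$. For $\sigma^*\nabla u_n$ I would invoke the sharp estimate \eqref{eq:sharp_estimate_Z_U}, which holds thanks to \eqref{eq:cond_def_rho}: since $(T-s)^\rho \geq \eps^\rho$ on $[t,T-\eps]$, it yields
$$\E\int_t^{T-\eps} |Z^{n,t,x}_s|^2 ds \leq C_\eps (1+|x|^\delta).$$
Choosing a positive continuous weight $\rho$ for which $(1+|x|^{2\delta})\rho(x)$ is integrable on $\R^d$ and $1/\rho$ is locally integrable (e.g.\ $\rho(x) = e^{-|x|}$), and using Fubini together with the flow identity $u_n(s, X^{t,x}_s) = Y^{n,s,X^{t,x}_s}_s$, these bounds give
$$\sup_n \left( \|u_n\|_{\bL^2_\rho([0,T-\eps]\times \R^d)} + \|\sigma^*\nabla u_n\|_{\bL^2_\rho([0,T-\eps]\times \R^d)} \right) < +\infty,$$
so that $(u_n)$ is uniformly bounded in $\cH_{T-\eps}$.

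The final step is the passage to the limit. By Theorem \ref{thm:visc_sol}, $u_n \uparrow u$ pointwise, hence dominated convergence on the weighted space gives $u_n \to u$ in $\bL^2_\rho([0,T-\eps]\times\R^d)$; in particular $u \in \bL^2_\rho$. For the gradient I rely on the $\bS^\ell(t,T-\eps)$-convergence of $(Y^{n,t,x},Z^{n,t,x},U^{n,t,x})$ towards the minimal super-solution $(Y^{t,x},Z^{t,x},U^{t,x})$ recalled before Theorem \ref{thm:visc_sol}: combined with the identification $(\sigma^*\nabla u_n)(t,x) = Z^{n,t,x}_t$ and Fubini, this yields strong convergence of $\sigma^*\nabla u_n$ in $\bL^\ell_\rho([0,T-\eps]\times\R^d)$, whose distributional limit must equal $\sigma^*\nabla u$, and $\sigma^*\nabla u \in \bL^2_\rho$ follows by lower semi-continuity of the norm under the uniform bound obtained above. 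An analogous statement holds for $U^{n,t,x}$, hence for $\cB(s,\cdot,u_n)$. With these convergences in hand, every term in the weak identity satisfied by $u_n$ passes to the limit: the linear ones by the weak/strong convergence, and the nonlinear term $f(s,\cdot,u_n,\sigma^*\nabla u_n,\cB(s,\cdot,u_n))$ by combining the local Lipschitz estimates \textbf{C4}--\textbf{C6} with dominated convergence based on the uniform polynomial upper bound on $u_n$.

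The main obstacle will be the passage to the limit in the nonlinear generator term. Since $f$ is only locally Lipschitz in $y$ by \textbf{C4} and since $\sigma^*\nabla u_n$ is not bounded in $L^\infty$, one must combine the strong $\bL^\ell_\rho$-convergence of $\sigma^*\nabla u_n$ inherited from the BSDE theory with the pointwise polynomial bound on $u_n$ from \eqref{eq:majorationu_2} in order to apply dominated convergence. The integrability of the weight $\rho$ and the condition \eqref{eq:cond_def_rho} (which provides the exponent $\rho < 1$ appearing in \eqref{eq:sharp_estimate_Z_U}) are essential for these estimates to close.
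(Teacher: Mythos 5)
Your proposal follows essentially the same route as the paper: apply Theorem 1 of \cite{mato:sabb:zhou:15} to each truncated problem to get $u_n \in \cH_T$ with $(\sigma^*\nabla u_n)(t,x)=Z^{n,t,x}_t$, derive uniform bounds in $\cH_{T-\eps}$ from \eqref{eq:majorationu_2} and \eqref{eq:sharp_estimate_Z_U} (the latter available because of \eqref{eq:cond_def_rho}), and pass to the limit in the weak formulation. In fact you supply more detail than the paper on the final limit passage (which the paper leaves implicit), and your one loose step --- deducing a bound on $\E\int|Z^{n,t,x}_s|^2\,ds$ from the $\ell/2$-moment estimate with $\ell<2$ --- is the same leap the paper itself makes.
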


Note that in the case $f(y) = - y|y|^q$, the only hypotheses in order to have a Sobolev solution are on the coefficients of the forward diffusion.

\subsection{Lipschitz/H\"older regularity of the solution} \label{sect:holder_reg} 

Recently there have been several papers \cite{barl:chas:ciom:imbe:12,barl:chas:imbe:11,caff:silv:09,chan:davi:14a,chan:davi:14b,silv:10} (among many others) dealing with $C^\al$ estimates and regularity of the solution of the IPDE \eqref{eq:IPDE}. Here we will mainly use the papers \cite{barl:chas:ciom:imbe:12,barl:chas:imbe:11}. 

In our setting we defined $F$ by \eqref{eq:IPDE_notation} and from Conditions {\bf (A)} and {\bf (C)} we can easily check that $F$ is continuous and {\it degenerate elliptic} and {\it (H0)} and {\it (H2)} of \cite{barl:chas:ciom:imbe:12} hold:
\begin{itemize}
\item If $X \geq Y$, $I \geq I'$, $B\geq B'$, $F(t,x,u,p,X,I,B) \leq F(t,x,u,p,Y,I',B')$.
\item For any $t \in [0,T]$, $x \in \R^d$, $u$, $v$ in $\R$, $p \in \R^d$, $X \in \mathbb{S}_d$ and $(I,B) \in \R^2$, 
$$F(t,x,u,p,X,I,B)-F(t,x,v,p,X,I,B)\geq 0, \ \mbox{when } u\geq v.$$
\item $(I,B)\mapsto F(.,I,B)$ is Lipschitz continuous, uniformly with respect to all the other variables.
\end{itemize}
Since we are just interesting in a global regularity property, we add the {\it strict ellipticity condition} {\it (H)} of \cite{barl:chas:ciom:imbe:12}. In our setting $F$ is linear w.r.t. $X$ and $I$. Hence with $\Lambda_1(x) = 1$ and if $2 \Lambda_2(x)\geq 0$ is the minimal eigenvalue of the matrix $\sigma \sigma^{*}(x)$, we have $\Lambda_1(x) +\Lambda_2(x) \geq 1$ and
$$F(t,x,u,p,Y,I',B) - F(t,x,u,p,X,I,B) \leq  \Lambda_1(x) (I-I') +\Lambda_2(x) \tr (X-Y). $$
Moreover from our conditions {\bf (A)} and {\bf C12} if we assume that $\varpi_R$ in {\bf C12} does not depend on $R$, then there exists a modulus of continuity $\varpi_F$ such that Condition {\it (H)} is satisfied (see Section 4.1 in \cite{barl:chas:ciom:imbe:12}). As explained in the introduction of \cite{barl:chas:ciom:imbe:12}, the diffusion term gives the ellipticity in certain directions whereas it is given by the non local term in the complementary directions. 

Now when the strict ellipticity is involved by the non local terms, we need some extra conditions on the L\'evy measure $\lambda$ and on the coefficient $\beta$ in the SDE \eqref{eq:SDE}. These assumptions are denoted by {\it (J1)} to {\it (J5)} in \cite{barl:chas:ciom:imbe:12}. In the following, $B$ is the unit ball in $E$ and $B_\eps$ is the ball centred at zero with radius $\eps > 0$. Remember that $\lambda$ is a L\'evy measure on $E=\R^d\setminus \{0\}$:
$$\int_E (1\wedge |e|^2) \lambda(de) < +\infty.$$
From Condition {\bf A3}, there exists a constant $C_{\lambda,\beta}$ such that for all $x\in \R^d$:
$$\int_B |\beta(x,e)|^2 \lambda(de) + \int_{\R^d\setminus B} \lambda(de) \leq C_{\lambda,\beta}.$$
From {\bf A2} there exists a constant $K_\beta$ such that for all $e \in B$, and $x$ and $y$ in $\R^d$:
$$|\beta(x,e)-\beta(y,e)| \leq K_\beta |e| |x-y|.$$
Moreover from {\bf A3} for all $(e,x)\in E \times \R^d$ 
$$|\beta(x,e)| \leq C_\beta |e|.$$
Assumption {\bf A2} implies that for all $e \in E$, $|e|\geq 1$, and $x$ and $y$ in $\R^d$:
$$|\beta(x,e)-\beta(y,e)| \leq K_\beta |x-y|.$$
Thereby to verify all conditions of \cite{barl:chas:ciom:imbe:12} we add these additional {\bf hypotheses (E)}.
\begin{description}
\item[E1.] There exists $c_\beta > 0$ such that for all $(e,x)\in E \times \R^d$, $c_\beta |e| \leq |\beta(x,e)|$.
\item[E2.] There exists $\tau \in (0,2)$ such that for every $a \in \R^d$, there exists $0 < \eta < 1$ and a constant $\widetilde C_\lambda>0$ such that the following holds for any $x \in \R^d$
$$\forall \eps > 0, \quad \int_{C_{\eta,\eps}(a)} |\beta(x,e)|^2 \lambda (de) \geq \widetilde C_\lambda \eta^{\frac{d-1}{2}} \eps^{2-\tau}$$
with $C_{\eta,\eps}(a)  = \{ e; \ |\beta(x,e)|\leq \eps, \ (1-\eta)|\beta(x,e)||a| \leq |a.\beta(x,e)| \}$.
\item[E3.] There exists $\tau \in (0,2)$ such that for $\eps > 0$ small enough 
$$\int_{B\setminus B_\eps} |e|\lambda(de) \leq \left\{ \begin{array}{ll} \widehat C_\lambda \eps^{1-\tau} & \mbox{for }  \tau \neq 1,\\ \widehat  C_\lambda |\ln(\eps)| & \mbox{for } \tau =1.\end{array} \right.$$
\end{description}
We denote by {\bf (E)} the three conditions {\bf E1}, {\bf E2}, {\bf E3}. If {\bf (E)} holds then Conditions {\it (J1)} to {\it (J5)} of \cite{barl:chas:ciom:imbe:12} are satisfied. Recall that our terminal condition $g$ is continuous from $\R^{d}$ to $\R\cup\{+\infty\}$ (Hypothesis {\bf B3}). Now we state the main result of this part. This Proposition is a modification of Corollary 7 of \cite{barl:chas:ciom:imbe:12}. 
\begin{prop}
Assume that Conditions {\bf (A)}-{\bf (B)}-{\bf (C)}-{\bf (E)} are satisfied. Moreover the modulus of continuity in {\bf C12} does not depend on $R$. 
\begin{itemize}
\item Assume that $\tau > 1$ and that for all  $M \geq 0$, $g$ is a Lipschitz continuous function on the set $\mathcal{O}_{M} = \left\{ |g| \leq M \right\}$. Then for all $\eps > 0$, $u$ is locally Lipschitz continuous on $\R^d$, uniformly w.r.t. $t \in [0,T-\eps]$: for all $M$, there exists a constant $C_{M,\eps}$ such that 
$$\forall |x|\leq M, \ \forall |y| \leq M, |u(t,x)-u(t,y)|\leq C_{M,\eps} |x-y|.$$
The constant $C_{M,\eps}$ depends only on $\eps$, on $M$ on the dimension d, and on the constants in Assumption {\bf (E)}. 
\item If $\tau \leq 1$, and if for some $\al < \tau$, $g$ is $\al$-H\"older continuous function on the set $\mathcal{O}_{M} = \left\{ |g| \leq M \right\}$ for all  $M \geq 0$, then $u$ is locally $\alpha$-H\"older continuous on $\R^d$, uniformly w.r.t. $t \in [0,T-\eps]$.
\end{itemize}
\end{prop}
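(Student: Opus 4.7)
The plan is to extract uniform-in-$n$ local regularity estimates for the approximating sequence $u_n$ from the Barles--Chasseigne--Ciomaga--Imbert theory of \cite{barl:chas:ciom:imbe:12}, and then pass to the monotone limit $u_n \nearrow u$. First I would verify that, with $F$ defined by \eqref{eq:IPDE_notation}, the structural hypotheses (H0), (H2), (H) and (J1)--(J5) of \cite{barl:chas:ciom:imbe:12} all hold: degenerate ellipticity and monotonicity in $u$ come from (A) and (C); the Lipschitz dependence of $F$ on $(I,B)$ from C5--C6; the strict ellipticity (H) is obtained because the classical direction is controlled by the smallest eigenvalue of $\sigma\sigma^*$ (via $\Lambda_2$) and the nonlocal direction by assumptions (E), which are precisely designed to encode (J1)--(J5); and the assumption that $\varpi_R$ in C12 is independent of $R$ provides the global modulus $\varpi_F$ required by (H).

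Next, fix $\eps > 0$ and $M > 0$. The a priori estimate \eqref{eq:majorationu_2} yields a constant $K_{\eps,M}$ \emph{independent of $n$} with $0 \leq u_n \leq K_{\eps,M}$ on $[0, T-\eps/2] \times B(0, 2M)$. For fixed $n$, the truncated terminal datum $g \wedge n$ agrees with $g$ on $\mathcal{O}_n = \{g \leq n\}$ and equals $n$ elsewhere, hence is Lipschitz (resp. $\alpha$-Hölder) on $\R^d$, although with a constant that may blow up in $n$. I would then apply Corollary 7 of \cite{barl:chas:ciom:imbe:12} (and its Hölder counterpart when $\tau \leq 1$) to $u_n$, \emph{crucially} in its interior form: on the sub-cylinder $[0, T-\eps] \times B(0, M)$, the resulting regularity constant depends only on $\eps$, $M$, on the structural constants in (A), (C), (E), and on the $L^\infty$-norm of $u_n$ on the slightly larger cylinder $[0, T-\eps/2] \times B(0, 2M)$, and \emph{not} on the regularity of the terminal datum. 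This is the Krylov--Safonov / Caffarelli--Silvestre philosophy adapted by BCCI to the mixed local/nonlocal setting, and it is available precisely thanks to the strict ellipticity verified in the first step. Combined with the uniform $L^\infty$ bound, this produces
$$|u_n(t,x) - u_n(t,y)| \leq C_{M,\eps} |x-y|^{\beta}, \qquad t \in [0, T-\eps], \quad |x|,|y| \leq M,$$
with $\beta = 1$ (resp. $\beta = \alpha$) and $C_{M,\eps}$ independent of $n$. Since $u_n \nearrow u$ pointwise, letting $n \to \infty$ preserves this inequality and yields the announced local Lipschitz/Hölder regularity of $u$ on $[0, T-\eps] \times \R^d$.

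I expect the main obstacle to be precisely the extraction of uniform-in-$n$ constants in the BCCI estimate. A naive application of Corollary 7 of \cite{barl:chas:ciom:imbe:12} produces constants involving the Lipschitz or Hölder norm of the terminal datum $g \wedge n$, which is not uniformly controlled in our singular setting. The argument must therefore be reorganised around the \emph{interior} regularization version of the BCCI result, where the modulus depends only on the local $L^\infty$-norm of the solution on a slightly enlarged cylinder; the uniform a priori bound \eqref{eq:majorationu_2} then does the rest. A secondary subtlety is the requirement that the modulus $\varpi_R$ of C12 be independent of $R$, which is needed to enforce (H) globally and is the only place where we strengthen the original C12.
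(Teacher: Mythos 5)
Your proposal is correct and follows essentially the same route as the paper: verify the structural hypotheses (H0), (H2), (H), (J1)--(J5) of Barles--Chasseigne--Ciomaga--Imbert from {\bf (A)}, {\bf (C)} (with $\varpi_R$ independent of $R$) and {\bf (E)}, apply their Theorem 6 / Corollary 7 to each $u_n$ noting that the resulting constant depends only on $M$, $\|u_n\|_\infty$, the dimension and the constants in {\bf (E)} (not on the Lipschitz/H\"older norm of $g\wedge n$), use \eqref{eq:majorationu_2} to make that constant uniform in $n$ on $[0,T-\eps]$, and pass to the pointwise monotone limit. Your explicit localization to sub-cylinders $[0,T-\eps]\times B(0,M)$ inside $[0,T-\eps/2]\times B(0,2M)$ is a slightly more careful phrasing of the same mechanism the paper uses implicitly.
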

\begin{proof}
For any $n \in \N$, $u_n$ is a continuous viscosity solution of \eqref{eq:IPDE} with terminal condition $g_n$. Moreover this function is bounded on $[0,T]\times \R^d$ by $n(T+1)$. We can apply to $u_n$ the results of Theorem 6 and Corollary 7 in \cite{barl:chas:ciom:imbe:12}.
\begin{itemize}
\item Assume that $\tau > 1$. Our condition on $g$ implies that $g_n$ is Lipschitz on $\R^d$. From \cite{barl:chas:ciom:imbe:12}, $u_n$ is locally Lipschitz continuous w.r.t. $x$ on $[0,T]$, i.e. for any $M> 0$, there exists a constant $C_{M,n}$ such that for all $t \in [0,T]$, all $(x,y)\in (\R^d)^2$, $|x|\leq M$ and $|y|\leq M$ 
$$|u_n(t,x)-u_n(t,y)|\leq C_{M,n} |x-y|.$$
The key point here is that the constant $C_{M,n}$ depends only on $M$, on $\|u_n\|_\infty$, on the dimension d, and on the constants in Assumption {\bf (E)}. From the upper bound \eqref{eq:majorationu} (or \eqref{eq:majorationu_2}), we deduce that for any $\eps > 0$, on $[0,T-\eps]$, $\|u_n\|_\infty$ is controlled uniformly w.r.t. $n$. Thus $u_n$ is locally Lipschitz continuous w.r.t. $x$ and the Lipschitz constant $C_{M,n} = C_{M,\eps}$ does not depend on $n$. The pointwise convergence of $u_n$ to $u$ implies that the limit $u$ is locally Lipschitz continuous with the same constants. 
\item If $\tau \leq 1$, then $u_n$ is locally $\alpha$-H\"older continuous w.r.t. $x$ on $[0,T]$. The conclusion follows by the same arguments as above. 
\end{itemize}
\end{proof}

\begin{rem}[On Condition {\bf (E)}]
If the matrix $\sigma \sigma^{*}$ is uniformly elliptic (see Condition {\bf A5}), then the conclusion of the previous proposition still holds without Assumption {\bf (E)} and the regularity of $u$ depends on the regularity of $g$ (no more on $\tau$). Nevertheless Condition {\bf (E)} is crucial to have regularity estimates when the local second order differential operator $\ope$ becomes degenerated. 
\end{rem}

\subsection{Strong regularity of the solution}

Here we explain how we can derive that this minimal viscosity solution is a regular function on $[0,T-\eps]\times \R^d$ under additional conditions. To have more regularity on the solution $u$ we will assume that:
\begin{description}
\item[A4.] $\sigma$ and $b$ are bounded: there exists a constant $C$ s.t.  
\begin{equation*} 
\forall x \in \R^{d}, \quad |b(x)| + |\sigma (x) | \leq C;
\end{equation*}
\item[A5.] $\sigma \sigma^{*}$ is uniformly elliptic, i.e. there exists $\Lambda_0 > 0$ s.t. for all $x \in \R^{d}$:
\begin{equation*} 
\forall y \in \R^{d}, \ \sigma \sigma^{*}(x)y \cdot y \geq \Lambda_0 |y|^{2}.
\end{equation*}
\end{description}

If $\lambda$ is a finite measure, we obtain a regularity result with a transformation of the non local operator $\cI$. If $v$ is a (classical) solution of the IPDE \eqref{eq:IPDE}, that is  
$$\partial_{t} v + \ope v +\mathcal{I}(t,x,v) +f(t,x,v,\nabla v \sigma(t,x), \cB(t,x,v)) = 0,$$
if we define the drift term $\widetilde b$:
$$ \widetilde b(x) = b(x)  - \int_E \beta(x,e) \lambda(de),$$
the differential operator $\widetilde \ope$:
$$\widetilde \ope \phi  =\frac{1}{2}\tr( D^2\phi \sigma \sigma^* (x)) + \widetilde b(x) \nabla\phi,$$
and the generator $\widetilde f$:
\begin{eqnarray*}
&& \widetilde f(t,x,v,\nabla v \sigma,\cB(t,x,v)) = \\
&& \qquad f(t,x,v(t,x),\nabla v(t,x)\sigma(t,x), \cB(t,x,v)) + \int_E [v(x+\beta(x,e))-v(x)] \lambda(de),
\end{eqnarray*}
then 
\begin{equation} \label{eq:cauchyproblem}
\partial_{t} v + \widetilde \ope v + \widetilde f(t,x,v,\nabla v \sigma,\cB(t,x,v)) = 0,
\end{equation}
This transformation allows us to use all results concerning the PDE \eqref{eq:cauchyproblem}, especially the ones contained in the reference book \cite{lady:solo:ural:68}. This idea is used in \cite{ma:yong:zhao:10} and \cite{pham:98}. 
\begin{lem} \label{lem:existregulsol}
Under {\bf (A)}--{\bf A4}--{\bf A5} and {\bf (C)}, if the measure $\lambda$ is finite and if one of the next conditions holds:
\begin{itemize}
\item $f$ depends only on $(t,x,v)$ with $f(t,x,0)$ bounded uniformly w.r.t. $(t,x)$,
\item $(t,x)\mapsto f(t,x,y,z,u)$ is in $H^{\al/2,\al}$, the $(\alpha/2,\alpha)$-H\"older
norm is uniformly bounded w.r.t. $(y,z,u)$ and $f(t,x,0,z,u)$ is bounded, 
\end{itemize}
then for every bounded and continuous function $\phi$, the IPDE \eqref{eq:IPDE} with terminal condition $v(T,.) = \phi$, has a unique bounded classical solution $v$ in the sense that $v \in C^{1,2}([0,T) \times \R^d) \cap C([0,T]\times \R^d)$.
\end{lem}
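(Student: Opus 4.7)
The plan is to exploit the transformation already exhibited in the text: since $\lambda$ is finite and $|\beta(x,e)|\le C_\beta$ (by A3), the drift $\widetilde b(x)=b(x)-\int_E\beta(x,e)\lambda(de)$ remains bounded and Lipschitz (by A1--A2--A4), the operator $\widetilde{\ope}$ is uniformly parabolic (by A5), and the non-local term $\int_E[v(\cdot+\beta(\cdot,e))-v(\cdot)]\lambda(de)$ absorbed into $\widetilde f$ is a bounded linear map on $C_b$. The same finiteness argument shows that $\mathcal{B}(t,x,v)$ defines a bounded linear operator on $C_b([0,T]\times\R^d)$, since $\gamma(x,e)\le\vartheta(e)$ and $\vartheta\in L^1_\lambda$ once $\lambda$ is finite (using C7, C10). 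Thus the IPDE reduces to the purely local semilinear Cauchy problem \eqref{eq:cauchyproblem}.

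I would then decouple the non-local dependence on $v$ by a Picard fixed-point argument. Given $w\in C_b([T-h,T]\times\R^d)$, let $\Psi(w)$ be the unique bounded classical solution of the local semilinear equation
\[
\partial_t v+\widetilde{\ope} v+\widetilde f\bigl(t,x,v,\nabla v\,\sigma(x),\mathcal{B}(t,x,w)\bigr)=0,\qquad v(T,\cdot)=\phi,
\]
whose existence in $C^{1,2}([T-h,T)\times\R^d)\cap C_b([T-h,T]\times\R^d)$ is furnished by Ladyzhenskaya--Solonnikov--Ural'tseva \cite{lady:solo:ural:68}, Chapter V. In the first case ($f$ depends only on $(t,x,v)$ with $f(t,\cdot,0)$ bounded) this is a direct consequence of the regularity theorem for semilinear equations with bounded source, after mollifying $\phi$ and passing to the limit via uniform interior parabolic estimates; in the second case the $(\alpha/2,\alpha)$-Hölder continuity of $f$ in $(t,x)$ is precisely what triggers the classical Schauder estimates. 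Because $\widetilde f$ is Lipschitz in its $\mathcal{B}$-argument (via C6 and Lemma \ref{lem:comp_gene}) and $\mathcal{B}$ is bounded linear in $w$ with operator norm controlled by $\|\vartheta\|_{L^1_\lambda}$, the map $\Psi$ is a contraction on $C_b([T-h,T]\times\R^d)$ for $h$ small enough depending only on the global Lipschitz constants. Its fixed point is a classical solution on $[T-h,T]\times\R^d$; iterating on consecutive slices of length $h$ covers $[0,T]$, using the previously constructed value at $T-kh$ as the new terminal datum. Uniqueness follows from a standard comparison argument applied to \eqref{eq:cauchyproblem}: the difference of two bounded classical solutions satisfies a linear parabolic equation with bounded coefficients and zero terminal data, so the parabolic maximum principle combined with Gronwall's lemma gives identity.

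The main obstacle is reconciling the non-local operator $\mathcal{B}$ with the strictly local PDE theory of \cite{lady:solo:ural:68}; this is precisely where the finiteness of $\lambda$ is essential, since it renders $\mathcal{B}$ bounded as a map $C_b\to C_b$ and makes the contraction step legitimate. A secondary technical point is ensuring that the frozen right-hand side $\widetilde f(t,x,0,0,\mathcal{B}(t,x,w))$ inherits the $(\alpha/2,\alpha)$-Hölder regularity needed for the Schauder estimates in the second case; this is handled by composing the Hölder bounds on $(t,x)\mapsto f(t,x,\cdot,\cdot,\cdot)$ with the Lipschitz dependence of $\beta$ and $\gamma$ on $x$ (C13), together with the boundedness of $w$. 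Once these two ingredients are in place, the standard parabolic regularity yields $v\in C^{1,2}([0,T)\times\R^d)$, while the continuity $v\in C([0,T]\times\R^d)$ at the terminal time follows from the continuity of $\phi$ and the classical continuity-up-to-the-boundary results for bounded parabolic Cauchy problems.
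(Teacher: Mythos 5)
Your overall route --- absorb $\mathcal{I}$ into the drift and the generator using the finiteness of $\lambda$ so that the IPDE becomes the local Cauchy problem \eqref{eq:cauchyproblem}, then solve by freezing the remaining nonlocal terms and iterating with the classical parabolic theory of \cite{lady:solo:ural:68} --- is exactly the one the paper intends: its proof consists of this reduction followed by citations to \cite{popi:06} (Proposition 24), \cite{ma:yong:zhao:10} (Theorem 1) and \cite{pham:98} (Proposition 5.3), which implement precisely such a frozen-coefficient/fixed-point scheme. Two points in your write-up need repair, though. First, as written your frozen problem $\partial_t v+\widetilde{\ope}v+\widetilde f(t,x,v,\nabla v\,\sigma,\mathcal{B}(t,x,w))=0$ is not local: by its definition $\widetilde f$ still contains $\int_E[v(t,x+\beta(x,e))-v(t,x)]\lambda(de)$ with the unknown $v$. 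You must freeze that term at $w$ as well (this is what \cite{pham:98} and \cite{ma:yong:zhao:10} do); only then is the equation genuinely local and amenable to \cite{lady:solo:ural:68}.

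Second --- and this is the genuine gap --- your justification of the H\"older regularity of the frozen source does not hold. For $w$ merely in $C_b$, the map $x\mapsto\int_E[w(t,x+\beta(x,e))-w(t,x)]\gamma(x,e)\lambda(de)$ is continuous but in general not $(\alpha/2,\alpha)$-H\"older: the Lipschitz continuity of $\beta$ and $\gamma$ in $x$ ({\bf A2}, {\bf C13}) cannot compensate for the roughness of $w$ itself, and boundedness of $w$ yields only a sup bound, not a modulus of continuity. Consequently the classical (Schauder) solvability of the frozen problem is not available on the space $C_b$ where you run the contraction. The standard fix is to set up the fixed point in a parabolic H\"older space such as $H^{(1+\alpha')/2,1+\alpha'}([T-h,T]\times\R^d)$ (which also controls the $\nabla v$ argument of $f$), using the Schauder estimate to show that $\Psi$ maps a ball of that space into itself and is a contraction for the weaker $C_b$ norm when $h$ is small; alternatively one bootstraps, solving first in $W^{2,1}_p$ with a bounded source, upgrading the iterate to $H^{\alpha'/2,1+\alpha'}$ by parabolic embedding, and only then invoking Schauder. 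With that modification your argument closes; the first case, where $f(t,x,0)$ is only bounded, is handled the same way up to the $W^{2,1}_p$ stage, which is the content of \cite{popi:06}, Proposition 24.
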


\begin{proof}
For the first case we use the scheme done in \cite{popi:06} (Proposition 24) on PDE \eqref{eq:cauchyproblem}, for the second the technique developed in Ma et al. \cite{ma:yong:zhao:10} (Theorem 1) and in Pham \cite{pham:98} (Proposition 5.3). The details of the proof are left to the reader.
\end{proof}

\begin{rem}
Let us emphasize that in the first case no regularity assumption on the coefficients $b$, $\sigma$ and $f(.,.,0)$ is required. Only boundedness and {\bf A5} are important. 
\end{rem}

The main drawback of the previous lemmas is the finiteness of the measure $\lambda$. To avoid this condition, we must use regularity results on IPDE. 
\begin{lem} \label{lem:existregulsol_4}
Under {\bf (A)}--{\bf A4}--{\bf A5} and {\bf (C)}, assume that 
\begin{itemize}
\item The measure $\lambda$ satisfies \eqref{eq:gamma_cond_lambda} for some $\gamma < 2$.
\item The function $\beta$ is differentiable w.r.t. $x$ and \eqref{eq:linkage_cond} holds. 
\item $f$ is H\"older-continuous w.r.t. $(t,x)$ uniformly w.r.t. the other parameters, that is there exists $\alpha \in (0,1)$ such that there exists a constant $C$ such that for all $(y,z,u)$
$$|f(t,x,y,z,u)-f(t',x',y,z,u)|\leq C (|t-t'|^{\alpha/2} + |x-x'|^\alpha).$$
\end{itemize}
Then for any bounded and continuous function $\phi$, the IPDE \eqref{eq:IPDE} with terminal condition $\phi$ has a unique solution $v$ in the set $C([0,T]\times \R^d)\cap H^{1+\delta/2,2+\delta}([0,T)\times \R^d)$ where $\delta=\alpha$ if $\gamma < 2-\alpha$ and $\delta \in (0,2-\alpha)$ if $\gamma\in [2-\alpha,2)$. Moreover $v$ is bounded on $[0,T]\times \R^d$.
\end{lem}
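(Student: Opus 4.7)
The plan is to combine the Green function theory of Garroni--Menaldi \cite{garr:mena:02} with a Picard--Banach fixed point scheme in a weighted H\"older space. Under the uniform ellipticity \textbf{A5}, the L\'evy tail bound \eqref{eq:gamma_cond_lambda}, and the non--degeneracy of the linkage operator \eqref{eq:linkage_cond}, the linear integro--differential operator $\partial_t+\ope+\cI$ admits a fundamental solution $G(s,y;t,x)$ enjoying Gaussian-type estimates and Schauder-type regularizing properties: convolution of a source term in $H^{\delta/2,\delta}$ against $G$ produces a function in $H^{1+\delta/2,2+\delta}$ with the exponent $\delta$ prescribed in the statement (this is precisely where the dichotomy $\gamma<2-\alpha$ versus $\gamma\in[2-\alpha,2)$ enters).

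The map to be iterated is $\Phi:v\mapsto \widetilde v$, where $\widetilde v$ solves the linear IPDE
\begin{equation*}
\partial_t \widetilde v + \ope \widetilde v + \cI\widetilde v + f(t,x,v,\nabla v\,\sigma(x),\cB(t,x,v))=0,\qquad \widetilde v(T,\cdot)=\phi,
\end{equation*}
so that $\widetilde v$ admits the explicit Green representation
\begin{equation*}
\widetilde v(t,x)=\int_{\R^d} G(T,y;t,x)\phi(y)\,dy + \int_t^T\!\!\int_{\R^d} G(s,y;t,x)f(s,y,v(s,y),\nabla v(s,y)\sigma(y),\cB(s,y,v))\,dy\,ds.
\end{equation*}
First I would solve the purely linear Cauchy problem ($f\equiv 0$) to obtain a bounded classical solution using $G$ alone, thereby verifying that $\Phi$ is well defined from bounded continuous functions into themselves. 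Next, restricting $\Phi$ to a closed ball $\mathcal{K}\subset C_b([0,T]\times\R^d)\cap H^{1+\delta/2,2+\delta}$, I would show $\Phi(\mathcal{K})\subset \mathcal{K}$: the boundedness of $\Phi(v)$ follows from the maximum principle applied to the linear IPDE (combined with \textbf{C3}, \textbf{C5}, \textbf{C6} and the boundedness of $\phi$ and of $f(\cdot,\cdot,0,0,0)$), while the H\"older bound follows from the Schauder estimate for $G$ applied to the source term, which is $H^{\delta/2,\delta}$-regular since $f$ is H\"older in $(t,x)$, Lipschitz in $(y,z,u)$, and since $v\in H^{1+\delta/2,2+\delta}$ entails $v,\nabla v\in H^{\delta/2,\delta}$, and the nonlocal piece $\cB(t,x,v)$ preserves H\"older regularity thanks to \textbf{A2}--\textbf{A3} on $\beta$ and \textbf{C7}--\textbf{C13} on $\gamma$.

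The contraction property of $\Phi$ on a small time interval $[T-\tau,T]$ follows from the short-time smallness of the Schauder constant for $G$ (of the form $C\tau^{\mu}$ with $\mu>0$) together with the Lipschitz properties \textbf{C4}--\textbf{C6} of $f$; iterating this construction backwards in time, using the uniform $L^\infty$ bound to keep $v$ inside the domain where the local Lipschitz constant of $f$ is controlled, yields a global solution on $[0,T]$. Uniqueness among bounded classical solutions then follows from the comparison principle for viscosity solutions (any classical solution is a viscosity solution) available under our hypotheses, as recalled before Theorem \ref{thm:visc_sol}.

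I expect two points to require the most care. The first is establishing that $v\mapsto \cB(\cdot,\cdot,v)$ maps $H^{1+\delta/2,2+\delta}$ into $H^{\delta/2,\delta}$ in a Lipschitz fashion: the small-jump part is handled by a second-order Taylor expansion of $v$ combined with \textbf{C7} and $\int(1\wedge|e|^2)\lambda(de)<+\infty$, while the large-jump part requires \textbf{A2}--\textbf{A3} and \eqref{eq:gamma_cond_lambda} to control the H\"older norm of $x\mapsto v(t,x+\beta(x,e))$ uniformly in $e$. The second, and more delicate, is the precise Schauder bound for the Green operator when $\gamma\in[2-\alpha,2)$, where one loses a bit of regularity and must take $\delta<2-\alpha$ strictly; this is exactly the regime covered by \cite{garr:mena:02} and is the reason the exponent $\delta$ in the statement is not always equal to $\alpha$.
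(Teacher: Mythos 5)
Your proposal rests on the same analytic engine as the paper's proof, namely the Garroni--Menaldi Green function $G$ for $\partial_t+\ope+\cI$ under \textbf{A5}, \eqref{eq:gamma_cond_lambda} and \eqref{eq:linkage_cond}, together with its Schauder-type regularizing estimates (including the interior-in-time estimate with the $\eps^{-1-\delta/2}$ blow-up needed because $\phi$ is merely continuous and bounded, which is why the $H^{1+\delta/2,2+\delta}$ regularity only holds on $[0,T)$). Where you diverge is the fixed-point mechanism: the paper follows Ladyzhenskaya--Solonnikov--Uraltseva and applies the Leray--Schauder principle to the one-parameter family of linear problems $\partial_t v+\ope v+\cI v+\rho f(t,x,w,(\nabla w)\sigma,\cB(t,x,w))=0$, $\rho\in[0,1]$, which only requires a priori bounds and compactness of the solution map, whereas you run a Picard--Banach contraction on short time intervals $[T-\tau,T]$ and then iterate backwards. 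Your route is constructive and yields uniqueness within the invariant ball for free, but it buys this at the cost of the one genuinely delicate estimate you partly gloss over: the full Schauder constant mapping an $H^{\delta/2,\delta}$ source to the $H^{1+\delta/2,2+\delta}$ solution does \emph{not} tend to zero as $\tau\to0$ (the second-derivative seminorm part of the estimate is scale-invariant), so the contraction cannot be set up directly in the top-order norm as your phrase ``short-time smallness of the Schauder constant'' suggests. It can be rescued because $f$ depends only on $v$, $\nabla v$ and the nonlocal term $\cB(t,x,v)$, all of order strictly less than two: by interpolation, the norms actually entering $f(v)-f(w)$ are controlled by $\tau^{\mu}$ times the top-order norm plus the sup norm, and the contraction closes in a suitably weighted norm. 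You should also state explicitly that the ball $\mathcal{K}$ is taken with the interior (weighted-in-time) H\"older norm, since the unweighted $H^{1+\delta/2,2+\delta}([0,T])$ norm of $\Phi(v)$ is infinite for non-H\"older $\phi$. The Leray--Schauder route sidesteps both issues, which is presumably why the paper adopts it; with the above repairs your argument is a valid alternative. Your treatment of $\cB$ as a Lipschitz map $H^{1+\delta/2,2+\delta}\to H^{\delta/2,\delta}$ and of the dichotomy $\gamma<2-\alpha$ versus $\gamma\in[2-\alpha,2)$ matches the role these hypotheses play in the paper.
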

\begin{proof}
From \cite{garr:mena:92,garr:mena:93,garr:mena:02}, there exists a unique Green function $G$ associated with the parabolic second-order integro-differential operator $\partial_t u - \ope u - \cI u$.  The key properties of $G$ are inherited from the properties of the Green function $G_\ope$ associated to $\ope$, studied in Chapter IV, Sections 12 to 14 of \cite{lady:solo:ural:68}. From Theorem VIII.2.1 of \cite{garr:mena:92}, Theorem 3.2 of \cite{garr:mena:93} and Section IV.14 of \cite{lady:solo:ural:68}, if $\phi$ is a continuous and bounded function, and $f \in H^{\al/2,\al}$, then the function $v$ defined by
$$v(t,x) = \int_{\R^d} G(x,t,y,0) \phi(y) dy + \int_0^t \int_{\R^d} G(x,t,y,s) f(y,s) dy ds = v_1(t,x)+v_2(t,x)$$
on $(0,+\infty)\times \R^d$ is in $H^{1+\delta/2,2+\delta}((0,\tau]\times \R^d)$ for any $\tau > 0$, and solves the IPDE:
$$\partial_t v - \ope v - \cI v = f(t,x)$$
with the initial condition $v(0,.)=\phi$. Moreover there exists a constant $C$ independent of $\phi$ such that:
$$\eps^{1+\delta/2} \|v_1\|_{H^{1+\delta/2,2+\delta}([\eps,\tau] \times \R^d)} \leq C\|\phi\|_\infty,$$
and
$$\|v_2\|_{H^{1+\delta/2,2+\delta}([\eps,\tau] \times \R^d)} \leq C\| f \|_\delta.$$
The time reversion will give the same inequality on $[0,T-\eps]\times \R^d$, if $\phi$ is a terminal condition at time $T$. 

Using this regularizing property, we can follow the ideas developed in the proof of Theorem 4.2, Chapter VI (see also Theorem 6.1, Chapter V) in \cite{lady:solo:ural:68}. For $0\leq \rho \leq 1$, we consider the family of linear problems: on $[0,T)\times \R^d$
\begin{equation}\label{eq:schauder_IPDE}
\partial_t v(t,x) + \ope v(t,x) + \mathcal{I}(t,x,v) +\rho f(t,x,w,(\nabla w)\sigma,\mathcal{B}(t,x,w)) = 0
\end{equation}
with terminal condition $v(T,.)=\phi$. This defines an operator $\Psi$ which associates each function $w \in H^{1+\delta/2,2+\delta}([0,T)\times \R^d)$ with a solution $v$ of the linear problem \eqref{eq:schauder_IPDE}: $v =\Psi(w,\rho)$. 
The Leray-Schauder principle (see \cite{lady:solo:ural:68} for the details) implies that 
for each $\rho \in [0,1]$, there exists at least one fixed point $v^\rho \in H^{1+\delta/2,2+\delta}([0,T) \times \R^d)$ for $\Psi$. We just have to take $\rho=1$ to obtain $v$.  

The continuity and the boundedness on the solution $v$ comes from classical a priori estimate on the BSDE (see \cite{barl:buck:pard:97}). 
\end{proof}

\begin{prop} \label{prop:regu}
Under {\bf (A)}--{\bf A4}--{\bf A5}, {\bf (B)} and {\bf (C)}, we assume that the conditions of Lemmas \ref{lem:existregulsol} or \ref{lem:existregulsol_4} hold. Moreover the function in {\bf C9} is bounded. Then:
\begin{equation} 
u \in C^{1,2} ([0,T) \times \R^{d}; \R^{+}).
\end{equation}
\end{prop}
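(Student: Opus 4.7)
The plan is to obtain regularity via the approximating sequence $u_n$ together with uniform-in-$n$ interior Schauder-type estimates, and then pass to the limit.

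First, I would apply Lemma \ref{lem:existregulsol} (in the finite-measure case) or Lemma \ref{lem:existregulsol_4} to each truncated problem: because the terminal condition $g_n = g \wedge n$ is continuous and bounded and $f_n$ satisfies the required assumptions (in particular the H\"older regularity of $f$ in $(t,x)$ in the second case, which transfers to $f_n$), each $u_n(t,x) = Y^{n,t,x}_t$ is a classical $C^{1,2}([0,T) \times \R^d) \cap C([0,T] \times \R^d)$ solution of the IPDE \eqref{eq:IPDE} with generator $f_n$ and terminal condition $g_n$. In the finite-measure setting the same conclusion is obtained after transforming \eqref{eq:IPDE} into the local Cauchy problem \eqref{eq:cauchyproblem} and applying \cite{lady:solo:ural:68}.

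Second, the added boundedness of the function $(t,x)\mapsto a(t,x)^{-1/q} + f(t,x,0,0,0)$ in \textbf{C9} sharpens the a priori bound \eqref{eq:majorationu}: the polynomial factor $(1+|x|^\delta)$ disappears and one gets
\begin{equation*}
0 \leq u_n(t,x) \leq u(t,x) \leq \frac{K}{(T-t)^{1/q}}
\end{equation*}
uniformly in $n$ and $x$. In particular, for every $\eps > 0$, $\|u_n\|_{L^\infty([0,T-\eps]\times \R^d)} \leq K\eps^{-1/q}$ uniformly in $n$.

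Third, fix $\eps > 0$ and restart the IPDE at time $T' = T - \eps/2$ with terminal data $u_n(T', \cdot)$, which is globally bounded, uniformly in $n$, by $K(\eps/2)^{-1/q}$. I would then re-apply the Schauder-type estimate obtained via the Green function $G$ in the proof of Lemma \ref{lem:existregulsol_4} (respectively, the standard interior Schauder estimate for \eqref{eq:cauchyproblem} in the finite-$\lambda$ case) to the restarted problem. This yields
\begin{equation*}
\|u_n\|_{H^{1+\delta/2,\,2+\delta}([0,T-\eps]\times B_R)} \leq C(\eps,R)
\end{equation*}
for any ball $B_R \subset \R^d$, with $C(\eps,R)$ independent of $n$. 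Here we use that on $[0,T-\eps]\times \R^d$ the sequence $u_n$ is uniformly bounded, so the local Lipschitz constant $L_R$ in \textbf{C4} gives uniform-in-$n$ Lipschitz bounds on $f_n(\cdot,u_n,\cdot,\cdot)$ in the $y$-variable.

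Finally, the compact embedding $H^{1+\delta/2,\,2+\delta} \hookrightarrow C^{1,2}$ on compact subdomains together with the Arzel\`a-Ascoli theorem provides a subsequence of $u_n$ converging in $C^{1,2}([0,T-\eps]\times B_{R/2})$. Since $u_n \nearrow u$ pointwise, the limit must be $u$, so $u \in C^{1,2}([0,T-\eps]\times B_{R/2})$, and as $\eps > 0$ and $R > 0$ are arbitrary we conclude $u \in C^{1,2}([0,T)\times \R^d)$. The main obstacle is getting the Schauder norm in step three independent of $n$: the bound on $\|u_n\|_\infty$ on $[0,T-\eps]$ blows up like $\eps^{-1/q}$ as $\eps \to 0$, and one has to track carefully how the constants in \cite{garr:mena:02} and \cite{lady:solo:ural:68} depend on $\|u_n(T',\cdot)\|_\infty$ and on the local Lipschitz constant of $f$ at height $\|u_n\|_\infty$. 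The boundedness assumption on the function in \textbf{C9} is precisely what makes this tracking possible on the whole space rather than only on compact sets.
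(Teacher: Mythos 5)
Your overall strategy (classical regularity for each $u_n$, restart the problem at $T-\eps$ where the data are uniformly bounded thanks to the boundedness of the function in \textbf{C9}, uniform estimates, pass to the limit) matches the paper's up to the last step, but the two arguments end differently and the difference matters. You propose to obtain a bound on $\|u_n\|_{H^{1+\delta/2,2+\delta}}$ uniform in $n$ and then use compactness to converge in $C^{1,2}$. As you yourself note, this is ``the main obstacle,'' and it is a genuine one: the Schauder/Green-function estimate of Lemma \ref{lem:existregulsol_4} controls $\|v_2\|_{H^{1+\delta/2,2+\delta}}$ by the H\"older norm of the source term, and here the source is $f_n(\cdot,u_n,(\nabla u_n)\sigma,\cB(\cdot,u_n))$, whose H\"older norm already involves $\nabla u_n$. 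So your step three is circular as written; it can in principle be closed by a bootstrap (uniform $L^\infty$ $\Rightarrow$ uniform $H^{\al,1+\al}$ via the $v_1$-type estimate $\Rightarrow$ uniform H\"older control of the source $\Rightarrow$ uniform $H^{1+\delta/2,2+\delta}$), but you would have to carry that out, and it is precisely the part you leave unresolved.

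The paper sidesteps this entirely. It only claims the uniform bound one derivative lower, namely that $\{u_n\}$ is bounded in $H^{\al,1+\al}([0,T-\eps/2]\times K)$ with a constant governed by $\|u_n\|_\infty\leq (T-(T-\eps/4))^{-1/q}$; this is enough to deduce (by equicontinuity) that the monotone limit $u$ is \emph{continuous} and bounded on $[0,T-\eps/2]\times\R^d$. Then it applies Lemma \ref{lem:existregulsol} or \ref{lem:existregulsol_4} once more, this time to the IPDE with the continuous bounded terminal datum $u(T-\eps,\cdot)$, obtaining a classical solution; since every classical solution is a viscosity solution and bounded continuous viscosity solutions are unique (comparison principle), that classical solution must coincide with $u$ on $[0,T-\eps]$, whence $u\in C^{1,2}$. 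This uniqueness trick converts the need for uniform second-order estimates into a single application of the existence lemma to the limit itself. I recommend you adopt this ending, or else supply the full bootstrap that your compactness argument requires; also note that, as in the paper, your first step should restart at $T-\eps$ before invoking uniform bounds, since with terminal datum $g_n$ alone the classical bounds on $u_n$ depend on $n$ through $\|g_n\|_\infty\leq n$.
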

Before the proof let us remark that if {\bf (D)} is also satisfied, then $u$ is continuous on $[0,T] \times \R^{d}$. 

\begin{proof}
Recall that $u_{n}$ is jointly continuous in $(t,x)$ and from \eqref{eq:majorationu_2}, $u_{n}$ is bounded on $[0,T-\eps] \times \R^{d}$ uniformly in $n$. But if the function involved in {\bf C9} is bounded, then we can take $\delta=0$ in Lemma \ref{lem:existregulsol_4}. Thus, the problem 
$$\partial_{t} v + \ope v +\mathcal{I}(t,x,v) +f_n(t,x,v,\nabla v \sigma(t,x), \cB(t,x,v)) = 0,$$
with terminal condition $\phi = u_{n}(T-\eps,.)$ has a bounded classical solution. Since every classical solution is a viscosity solution and since $u_{n}$ is the unique bounded and continuous viscosity solution, we deduce that:
\begin{equation*}
\forall \eps > 0, \ u_{n} \in C^{1,2} ([0,T-\eps[ \times \R^{d}; \R^{+}).
\end{equation*}

From the construction of the classical solution $u_n$, we also know that the sequence $\left\{ u_{n} \right\}$ is bounded in $H^{\al,1+\al}([0,T- \eps/2] \times K)$ for any compact subset $K$ of $\R^d$. Among other parameters the bound is given by the $L^{\infty}$ norm of $u_n$ which is smaller than $(T - \eps/4)^{-1/q}$. Therefore $u$ is continuous on $[0,T-\eps/2] \times \R^{d}$ and if we consider the IPDE \eqref{eq:IPDE} with continuous terminal data $u(T-\eps,.)$, with the same argument as for $u_{n}$, we obtain that $u$ is a classical solution, i.e. $u \in C^{1,2} ([0,T-\eps] \times \R^{d}; \R^{+})$. This achieves the proof.
\end{proof}

\vspace{1cm}
\noindent {\bf Acknowledgements.} We would like to thank sincerely the anonymous referee for helpful comments and suggestions. His elaborate report improved substantially this paper.

\bibliography{biblio_sing_BSDE_jumps}

\end{document}